\documentclass[10pt]{amsart}
\usepackage{a4,amssymb,times,amsmath,cancel,mathrsfs,multirow,textcomp}
\usepackage{graphicx,color,epsfig}

\usepackage[all]{xy}
\usepackage{tikz}
\usetikzlibrary{calc,3d,arrows}
\def\r{\rightarrow}
\def\l{\leftarrow}
\def\ot{\leftarrow}
\def\u{\uparrow}
\def\d{\downarrow}
\def\Id{\id}
\def\opp{\mathrm{opp}}
\def\triv{o}

\def\gen{{\gamma}}
\let\cal\mathcal
\def\cA{{\mathtt A}}
\def\cB{{\mathtt B}}
\def\cE{{\cal E}}
\def\cF{{\cal F}}
\def\cG{{\cal G}}
\def\cH{{\cal H}}
\def\cI{{\cal I}}
\def\cJ{{\cal J}}
\def\cK{{\cal K}}
\def\cL{{\cal L}}
\def\cM{{\cal M}}
\def\cN{{\cal N}}
\def\cO{{\cal O}}
\def\cP{{\cal P}}
\def\cQ{{\cal Q}}
\def\cR{{\cal R}}
\def\cS{{\cal S}}
\def\cT{{\cal T}}
\def\cU{{\cal U}}
\def\cV{{\cal V}}
\def\cW{{\cal W}}
\def\cX{{\cal X}}
\def\cY{{\cal Y}}
\def\cZ{{\cal Z}}
\def\pZ{{\cal Z}}
\def\eps{{\epsilon}}

%
%
\let\blb\mathbb
\def\normalize{\mathtt {normalize}}
\def\CM{{\blb {CM}}}
\def\H{{\blb M}}
\def\HM{{\blb {MM}}}
\def\NCCR{{\blb {NCCR}}}
\def\DD{{\blb D}}
\def\UU{{\blb U}}
\def\XX{{\blb X}}
\def\FF{{\blb F}}
\def\cFF{{\blb F}}
\def\cGG{{\blb G}}
\def\QQ{{\blb Q}}
\def\GG{{\blb G}}
\def \PP{{\blb P}}
\def \AA{{\blb A}}
\def \ZZ{{\blb Z}}
\def \T{{\blb T}}
\def \TT{{\blb T}}
\def \NN{{\blb N}}
\def \Rl{{\blb R}}
\def \HH{{\blb H}}
\def \VV{{\blb V}}
\def \SS{{\blb S}}

\newcommand{\se}[1]{\begin{equation*}\begin{split}#1\end{split}\end{equation*}}
\newcommand{\ceil}[1]{\lceil #1 \rceil}
\newcommand{\cone}[1]{[ #1 ]}
\newcommand{\lvect}[1]{\overleftarrow{#1}}
\newcommand{\wis}[1]{{\text{\em \usefont{OT1}{cmtt}{m}{n} #1}}}
\newcommand{\del}[1]{{#1}^{-1}}
\newcommand{\C}{\mathbb{C}}
\newcommand{\N}{\mathbb{N}}
\newcommand{\Z}{\mathbb{Z}}
\newcommand{\R}{\mathbb{R}}
\newcommand{\RZ}{\zeta}
\newcommand{\cMF}{\mathscr{Mf}}
\newcommand{\FM}{\mathscr{F}}
\newcommand{\rest}{\mathrm{rest}}
\def\cLL{{\mathscr L}}
\def\ccE{{\mathscr E}}
\def\ccM{{\mathscr M}}
\def\ccU{{\mathscr U}}
\def\cKK{{\mathscr K}}
\def\ccF{{\mathscr F}}
\def\ccG{{\mathscr G}}
\def\cTT{{\mathscr T}}

\newcommand{\Rplus}{\mathbb{R_+}}
\newcommand{\genmu}{\text{\textmu}}
\newcommand{\atimes}{\underset{\text{{\it\tiny A--A}}}{\otimes}}
\newcommand{\stimes}{\underset{\text{{\it\tiny S--S}}}{\otimes}}
\newcommand{\unit}{\mathsf{1}}
\newcommand{\Affine}{\mathbb{A}}
\newcommand{\wt}{*+{\circ}}
\newcommand{\bk}{*+{\bullet}}
\newcommand{\grp}[1]{\mathsf{#1}}
\newcommand{\lie}[1]{\mathsf{#1}}
\newcommand{\mut}{\mathsf{mut}}
\newcommand{\vtx}[1]{*+[o][F-]{\scriptscriptstyle #1}}
\newcommand{\sqvtx}[1]{*+[F]{\scriptscriptstyle #1}}
\newcommand{\block}[1]{*+[F]{#1}}
\newcommand{\mtx}[2]{\left (\begin{array}{#1} #2 \end{array}\right )}
\newcommand{\St}{\textrm{St}}
\newcommand{\Tr}{\textrm{Tr}}
\newcommand{\rank}{\textrm{rank}}
\newcommand{\coker}{\textrm{coker}}
\newcommand{\Cok}{\mathtt{Cok}}
\newcommand{\cycle}{\circlearrowright}
\newcommand{\cycl}{\cycle}
\newcommand{\proj}{\wis{proj}}
\newcommand{\GT}{\ensuremath{\mathsf{G}}}
\newcommand{\TG}{\GT}
\newcommand{\grep}{\ensuremath{\mathsf{grep}}}
\newcommand{\Spec}{\ensuremath{\mathsf{Spec}}}
\newcommand{\giss}{\ensuremath{\mathsf{giss}}}
\newcommand{\gtiss}{\ensuremath{\mathsf{gtiss}}}
\newcommand{\gtrep}{\ensuremath{\mathsf{gtrep}}}
\newcommand{\siss}{\ensuremath{\mathsf{siss}}}
\newcommand{\srep}{\ensuremath{\mathsf{srep}}}
\newcommand{\<}{\langle}
\renewcommand{\>}{\rangle}
\newcommand{\spl}{\mathsf{split}}
\newcommand{\Aut}{\ensuremath{\mathsf{Aut}}}
\newcommand{\lift}{\ensuremath{\mathsf{lift}}}
\newcommand{\Tor}{\ensuremath{\mathsf{Tor}}}
\newcommand{\Mor}{\ensuremath{\mathsf{Mor}}}
\newcommand{\Bimod}{\ensuremath{\mathsf{Bimod\,}}}
\newcommand{\aut}{\Aut}
\newcommand{\degree}{\text{deg}}
\newcommand{\lcm}{\text{lcm}}
\newcommand{\rdeg}{\text{rdeg}}
\newcommand{\vdeg}{\text{vdeg}}
\newcommand{\coeff}{\text{cf}}
\newcommand{\ind}{{\xy 0;/r.08pc/:
(3,-3); (5,-3) **@{-}; (5,5) **@{-}; (0,2) **@{-};
(3,-3); (3,4) **@{-}
\endxy}}
\newcommand{\bigt}{{\LARGE \mbox{\textsf{T}}}}
\newcommand{\tiss}{\ensuremath{\mathsf{tiss}}}
\newcommand{\simples}{\ensuremath{\mathsf{simples}}}
\newcommand{\End}{\ensuremath{\mathsf{End}}}
\newcommand{\Stab}{\ensuremath{\mathsf{Stab}}}
\newcommand{\Trans}{\mathsf{Trans}}
\newcommand{\Ext}{\mathsf{Ext}}
\newcommand{\Pic}{\mathsf{Pic}}
\newcommand{\Eqv}{\mathsf{Eqv}}
\newcommand{\Inv}{\mathsf{Inv}}
\newcommand{\Dim}{\mathsf{Dim \,}}
\newcommand{\sign}{\mathsf{sign \,}}
\newcommand{\image}{\mathsf{Im \,}}
\newcommand{\vierkant}[1]{\boxed{\scriptscriptstyle{#1}}}
\newcommand{\CCH}{\ensuremath{\mathsf{CH}}}
\newcommand{\spec}{\wis{spec}}
\font\ef = eufb10
\newcommand{\ideal}[1]{\ensuremath{\text{\ef{#1}}}}
\newcommand{\Symm}{\ideal{S}}
\newcommand{\sto}{\Rightarrow}
\newcommand{\Span}{\mathrm{Span}}

\newtheorem{lemma}{Lemma}[section]
\newtheorem{proposition}[lemma]{Proposition}
\newtheorem{theorem}[lemma]{Theorem}

\theoremstyle{definition}

\newtheorem{example}[lemma]{Example}

\newtheorem{question}[lemma]{Question}

{

}

\theoremstyle{remark}

\newtheorem{remark}[lemma]{Remark}

\def\thenotation{}

\newcommand{\prf}{\noindent\textbf{Proof.}~}
\newcommand{\eop}{\hfill $\square$\\}
\newcommand{\csum}[1]{\stackrel{\scriptscriptstyle\#}{\scriptscriptstyle#1}}
\newcommand{\rep}{\ensuremath{\mathsf{rep}}}
\newcommand{\brep}{\ensuremath{\mathsf{brep}}}
\newcommand{\trep}{\ensuremath{\mathsf{trep}}}
\newcommand{\Trep}{\ensuremath{\mathsf{trep}}}
\newcommand{\iss}{\ensuremath{\mathsf{iss}}}
\newcommand{\Sup}{\ensuremath{\mathsf{Sup}}}
\newcommand{\coh}{\ensuremath{\mathsf{Coh}}}
\newcommand{\Proj}{\ensuremath{\mathsf{Proj}}}
\newcommand{\moss}{\mathsf{moss}}
\newcommand{\Mat}{\mathsf{Mat}}
\newcommand{\ress}{\mathsf{ress}}
\newcommand{\Null}{\mathsf{Null}}
\newcommand{\Nil}{\mathsf{Nil}}
\newcommand{\Rep}{\mathsf{Rep}}
\newcommand{\RRep}{\mathsf{RRep}}
\newcommand{\Hom}{\mathtt{Hom}}
\newcommand{\RHom}{\textrm{RHom}}
\newcommand{\Ker}{\textrm{Ker}}
\newcommand{\Image}{\textrm{Im}}
\newcommand{\GL}{\ensuremath{\mathsf{GL}}}
\newcommand{\id}{\mathbf{1}}
\newcommand{\defect}{\textrm{def}}
\newcommand{\diag}{\wis{diag}}
\newcommand{\quot}{/\!\!/}
\newcommand{\A}{\ensuremath{\mathbb{A}}}
\newcommand{\V}{\ensuremath{\mathbb{V}}}
\newcommand{\lt}{\ensuremath{\mathsf{lt}}}

\newcommand{\e}{\varepsilon}
\renewcommand{\t}[1]{\textnormal{#1}}

\newcommand{\tcat}{\cT}
\newcommand{\carr}{\mathtt R}
\newcommand{\ccyc}{\mathtt E}
\newcommand{\PM}{\cP}
\renewcommand{\Rplus}{\R_+}
\newcommand{\Ob}{\mathtt{Ob}\,}

\renewcommand{\H}{\mathtt{H}\,}
\newcommand{\Tw}{\mathtt{Tw}\,}
\newcommand{\cHH}{\mathtt{H}}
\newcommand{\Hoch}{\mathtt{HochC}}
\newcommand{\Coh}{\mathtt{Coh}\,}
\newcommand{\Perf}{\mathtt{Perf}\,}
\newcommand{\Sing}{\mathtt{Sing}\,}
\newcommand{\cJA}{\mathtt{Jac}\,}
\newcommand{\cQA}{\mathtt{Gtl}\,}
\def\cC{{\mathtt C}}

\def\Fuk{{\mathtt {Fuk}}}
\def\WFuk{{\mathtt {WFuk}}}
\def\MF{{\mathtt {MF}}}
\def\TF{{\mathtt {F}}}
\def\cD{{\mathtt D}}
\def\D{{\mathtt D}}
\def\cDb{{\mathtt D^b}}
\def\fuk{{\mathtt {fuk}}}
\def\mf{{\mathtt {mf}}}
\def\Pg{{\mathtt {P}}}

\def\qpol{\mathrm{Q}}
\newcommand{\mirror}[1]{\mathrel{\reflectbox{$#1$}}}
\newcommand{\mmirror}[1]{\!\!\mathrel{\reflectbox{$#1$}}\!\!}
\newcommand{\rect}{\mathrm{R}}
\newcommand{\smatrix}[1]{\left(\begin{smallmatrix}#1\end{smallmatrix}\right)}
\newcommand{\Mod}{\ensuremath{\mathtt{Mod}}}

\title{Toric systems and mirror symmetry}

\author{Raf Bocklandt}
\address{Raf Bocklandt\\
School of Mathematics and Statistics\\
Herschel Building\\
Newcastle University\\
Newcastle upon Tyne\\
NE1 7RU\\
UK}
\email{raf.bocklandt@gmail.com}

\xyoption{all}

\begin{document}
\begin{abstract}
In \cite{perlinghille} Hille and Perling associate to every cyclic full strongly exceptional sequence of
line bundles on a toric weak Fano surface a toric system, which defines a new toric surface.
In this note we interprete this construction as an instance of mirror symmetry and extend it
to a duality on the set toric weak Fano surfaces equiped with a cyclic full strongly exceptional sequence.
\end{abstract}

\maketitle

\section{Reflexive polygons and weak Fano surfaces}
A convex integral polygon in $\Z^2$ that has exactly one internal lattice point is called a \emph{reflexive polygon}.
Up to integral affine transformations, there are precisely $16$ reflexive polygons, which are shown in the table below:
\begin{center}
\begin{tabular}{cccc}
\resizebox{!}{1cm}{\begin{tikzpicture}
\filldraw [gray] 
(-.5,-.5) circle (2/2pt) (0,-.5) circle (2/2pt) (.5,-.5) circle (2/2pt)
(-.5,0) circle (2/2pt)  (.5,0) circle (2/2pt)
(-.5,.5) circle (2/2pt) (0,.5) circle (2/2pt) (.5,.5) circle (2/2pt);
\draw (0,0) node {3a};
\draw (-.5,-.5) -- (.5,0) -- (0,.5) -- (-.5,-.5);  
\end{tikzpicture}}&
\resizebox{!}{1cm}{\begin{tikzpicture}
\filldraw [gray] 
(-.5,-.5) circle (2/2pt) (0,-.5) circle (2/2pt) (.5,-.5) circle (2/2pt)
(-.5,0) circle (2/2pt)  (.5,0) circle (2/2pt)
(-.5,.5) circle (2/2pt) (0,.5) circle (2/2pt) (.5,.5) circle (2/2pt);
\draw (0,-.5) -- (.5,0) -- (0,.5) -- (-.5,0) -- (0,-.5);  
\draw (0,0) node {4a};
\end{tikzpicture}}&\resizebox{!}{1cm}{\begin{tikzpicture}
\filldraw [gray] 
(-.5,-.5) circle (2/2pt) (0,-.5) circle (2/2pt) (.5,-.5) circle (2/2pt)
(-.5,0) circle (2/2pt)  (.5,0) circle (2/2pt)
(-.5,.5) circle (2/2pt) (0,.5) circle (2/2pt) (.5,.5) circle (2/2pt);
\draw (.5,-.5) -- (0,.5) -- (-.5,0) -- (0,-.5) -- (.5,-.5);  
\draw (0,0) node {4b};
\end{tikzpicture}}&\resizebox{!}{1cm}{\begin{tikzpicture}
\filldraw [gray] 
(-.5,-.5) circle (2/2pt) (0,-.5) circle (2/2pt) (.5,-.5) circle (2/2pt)
(-.5,0) circle (2/2pt)  (.5,0) circle (2/2pt)
(-.5,.5) circle (2/2pt) (0,.5) circle (2/2pt) (.5,.5) circle (2/2pt);
\draw (.5,-.5) -- (0,.5) -- (-.5,-.5) -- (.5,-.5);  
\draw (0,0) node {4c};
\end{tikzpicture}}\\
\resizebox{!}{1cm}{\begin{tikzpicture}
\filldraw [gray] 
(-.5,-.5) circle (2/2pt) (0,-.5) circle (2/2pt) (.5,-.5) circle (2/2pt)
(-.5,0) circle (2/2pt)  (.5,0) circle (2/2pt)
(-.5,.5) circle (2/2pt) (0,.5) circle (2/2pt) (.5,.5) circle (2/2pt);
\draw (.5,0) -- (0,.5) -- (-.5,.5) -- (-.5,0) -- (0,-.5) -- (.5,0);  
\draw (0,0) node {5a};
\end{tikzpicture}}&\resizebox{!}{1cm}{\begin{tikzpicture}
\filldraw [gray] 
(-.5,-.5) circle (2/2pt) (0,-.5) circle (2/2pt) (.5,-.5) circle (2/2pt)
(-.5,0) circle (2/2pt)  (.5,0) circle (2/2pt)
(-.5,.5) circle (2/2pt) (0,.5) circle (2/2pt) (.5,.5) circle (2/2pt);
\draw (.5,-.5) -- (0,.5) -- (-.5,0) -- (-.5,-.5) -- (.5,-.5);  
\draw (0,0) node {5b};
\end{tikzpicture}}&\resizebox{!}{1cm}{\begin{tikzpicture}
\filldraw [gray] 
(-.5,-.5) circle (2/2pt) (0,-.5) circle (2/2pt) (.5,-.5) circle (2/2pt)
(-.5,0) circle (2/2pt)  (.5,0) circle (2/2pt)
(-.5,.5) circle (2/2pt) (0,.5) circle (2/2pt) (.5,.5) circle (2/2pt);
\draw (.5,0) -- (0,.5) -- (-.5,.5) -- (-.5,0) -- (0,-.5) -- (.5,-.5) -- (.5,0);  
\draw (0,0) node {6a};
\end{tikzpicture}}&\resizebox{!}{1cm}{\begin{tikzpicture}
\filldraw [gray] 
(-.5,-.5) circle (2/2pt) (0,-.5) circle (2/2pt) (.5,-.5) circle (2/2pt)
(-.5,0) circle (2/2pt)  (.5,0) circle (2/2pt)
(-.5,.5) circle (2/2pt) (0,.5) circle (2/2pt) (.5,.5) circle (2/2pt);
\draw (.5,0) -- (0,.5) -- (-.5,.5) -- (-.5,-.5) -- (0,-.5) -- (.5,0);  
\draw (0,0) node {6b};
\end{tikzpicture}}\\
\resizebox{!}{1cm}{\begin{tikzpicture}
\filldraw [gray] 
(-.5,-.5) circle (2/2pt) (0,-.5) circle (2/2pt) (.5,-.5) circle (2/2pt)
(-.5,0) circle (2/2pt)  (.5,0) circle (2/2pt)
(-.5,.5) circle (2/2pt) (0,.5) circle (2/2pt) (.5,.5) circle (2/2pt)
(-1,-.5) circle (2/2pt) (-1,0) circle (2/2pt) (-1,.5) circle (2/2pt);
\draw (.5,0) -- (0,.5) -- (-1,-.5) -- (0,-.5) -- (.5,0);  
\draw (0,0) node {6c};
\end{tikzpicture}}&
\resizebox{!}{1cm}{\begin{tikzpicture}
\filldraw [gray] 
(-.5,-.5) circle (2/2pt) (0,-.5) circle (2/2pt) (.5,-.5) circle (2/2pt)
(-.5,0) circle (2/2pt)  (.5,0) circle (2/2pt)
(-.5,.5) circle (2/2pt) (0,.5) circle (2/2pt) (.5,.5) circle (2/2pt)
(-1,-.5) circle (2/2pt) (-1,0) circle (2/2pt) (-1,.5) circle (2/2pt);
\draw (0,.5) -- (-1,-.5) -- (.5,-.5) -- (0,.5);  
\draw (0,0) node {6d};
\end{tikzpicture}}&\resizebox{!}{1cm}{\begin{tikzpicture}
\filldraw [gray] 
(-.5,-.5) circle (2/2pt) (0,-.5) circle (2/2pt) (.5,-.5) circle (2/2pt)
(-.5,0) circle (2/2pt)  (.5,0) circle (2/2pt)
(-.5,.5) circle (2/2pt) (0,.5) circle (2/2pt) (.5,.5) circle (2/2pt);
\draw (.5,0) -- (0,.5) -- (-.5,.5) -- (-.5,-.5) -- (.5,-.5) -- (.5,0); 
\draw (0,0) node {7a};
\end{tikzpicture}}&
\resizebox{!}{1cm}{\begin{tikzpicture}
\filldraw [gray] 
(-.5,-.5) circle (2/2pt) (0,-.5) circle (2/2pt) (.5,-.5) circle (2/2pt)
(-.5,0) circle (2/2pt)  (.5,0) circle (2/2pt)
(-.5,.5) circle (2/2pt) (0,.5) circle (2/2pt) (.5,.5) circle (2/2pt)
(-1,-.5) circle (2/2pt) (-1,0) circle (2/2pt) (-1,.5) circle (2/2pt);
\draw (.5,0) -- (0,.5) -- (-1,-.5) -- (.5,-.5) --(.5,0); 
\draw (0,0) node {7b};
\end{tikzpicture}}\\
\resizebox{!}{1cm}{\begin{tikzpicture}
\filldraw [gray] 
(-.5,-.5) circle (2/2pt) (0,-.5) circle (2/2pt) (.5,-.5) circle (2/2pt)
(-.5,0) circle (2/2pt)  (.5,0) circle (2/2pt)
(-.5,.5) circle (2/2pt) (0,.5) circle (2/2pt) (.5,.5) circle (2/2pt);
\draw (.5,.5) -- (-.5,.5) -- (-.5,-.5) -- (.5,-.5) -- (.5,.5);  
\draw (0,0) node {8a};
\end{tikzpicture}}&\resizebox{!}{1cm}{\begin{tikzpicture}
\filldraw [gray] 
(-.5,-.5) circle (2/2pt) (0,-.5) circle (2/2pt) (.5,-.5) circle (2/2pt)
(-.5,0) circle (2/2pt)  (.5,0) circle (2/2pt)
(-.5,.5) circle (2/2pt) (0,.5) circle (2/2pt) (.5,.5) circle (2/2pt)
(1,-.5) circle (2/2pt) (1,0) circle (2/2pt) (1,.5) circle (2/2pt);
\draw (0,.5) -- (-.5,.5) -- (-.5,-.5) -- (1,-.5) -- (0,.5);  
\draw (0,0) node {8b};
\end{tikzpicture}}&\resizebox{!}{1cm}{\begin{tikzpicture}
\filldraw [gray] 
(-.5,-.5) circle (2/2pt) (0,-.5) circle (2/2pt) (.5,-.5) circle (2/2pt)
(-.5,0) circle (2/2pt)  (.5,0) circle (2/2pt)
(-.5,.5) circle (2/2pt) (0,.5) circle (2/2pt) (.5,.5) circle (2/2pt)
(1,-.5) circle (2/2pt) (1,0) circle (2/2pt) (1,.5) circle (2/2pt)
(-1,-.5) circle (2/2pt) (-1,0) circle (2/2pt) (-1,.5) circle (2/2pt);
\draw (0,.5) -- (1,-.5) -- (-1,-.5) -- (0,.5);  
\draw (0,0) node {8c};
\end{tikzpicture}}&\resizebox{!}{1cm}{\begin{tikzpicture}
\filldraw [gray] 
(-.5,-.5) circle (2/2pt) (0,-.5) circle (2/2pt) (.5,-.5) circle (2/2pt)
(-.5,0) circle (2/2pt)  (.5,0) circle (2/2pt)
(-.5,.5) circle (2/2pt) (0,.5) circle (2/2pt) (.5,.5) circle (2/2pt)
(1,-.5) circle (2/2pt) (1,0) circle (2/2pt) (1,.5) circle (2/2pt)
(-.5,1) circle (2/2pt) (0,1) circle (2/2pt) (.5,1) circle (2/2pt)
(1,1) circle (2/2pt);
\draw (-.5,1) -- (-.5,-.5) -- (1,-.5) -- (-.5,1);  
\draw (0,0) node {9a};
\end{tikzpicture}}
\end{tabular}
 \end{center}
Fix a reflexive polygon $\Pg$, let $(0,0)$ be the internal lattice point and $v_1,\dots,v_k$ be the lattice points on the boundary of the polygon in cyclic order.
From this polygon we can construct a toric fan
\[
\{0, \cone{v_1}\,\dots, \cone{v_k}, \cone{v_1,v_2},\dots, \cone{v_k,v_1} \} 
\]
where $[u_1,\dots,u_l]$ is shorthand for $\R^+u_1+\dots+\R^+u_l$. 
This fan define a projective smooth toric surface $X_\Pg$. 
This surface is a Fano variety\footnote{A smooth variety is Fano if its anticanonical bundle is ample.} if all $v_i$ are corners of the polygon 
and a weak Fano variety otherwise. 

We can associate a sequence of numbers
\[
 (a_1,\dots, a_k) \text{ such that } v_{i-1}+a_iv_i+v_{i+1}=0.
\]
to this fan and up to cyclic shifts and inversion of the order this sequence determines the polygon up to affine transformations 
and the toric variety up to isomorphism.

In \cite{perlinghille} Hille and Perling studied
full cyclic strongly exceptional sequences of line bundles on weak Fano surfaces.
These are infinite sequences of line bundles $\dots, \cLL_i, \cLL_{i+1},\dots$ such that 
\begin{itemize}
 \item $\Ext^r(\cLL_i,\cLL_j)=\Ext^r(\cLL_j,\cLL_i)=0$ if $r>0$ and $i\le j< i+k$,
 \item $\Hom(\cLL_i,\cLL_j)=0$ if $i>j$,
 \item $\cLL_{i+k} = \cLL_{i}\otimes \cKK^{-1}$.
\end{itemize}
Where $\cKK$ is the canonical bundle and $k$ is the rank of the Grothendieck group, which is the same as the number of vectors
$v_i$.

Hille and Perling classified these full cyclic strongly exceptional sequences and 
proved a remarkable and strange result: 
\begin{theorem}[Hille, Perling \cite{perlinghille}]
Given a cyclic full strongly exceptional sequence $(\cLL_i)$ on a toric surface,
the sequence of numbers
\[
(b_1,\dots,b_k) := (\dim\Hom(\cLL_{i},\cLL_{i+1})-2,\dots, \dim\Hom(\cLL_{i+k-1},\cLL_{i+k})-2)
\]
corresponds to the sequence of a new weak Fano surface.
\end{theorem}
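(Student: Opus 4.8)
The plan is to recognise $(b_1,\dots,b_k)$ as the sequence of self-intersection numbers of the toric system that $(\cLL_i)$ determines, and then to build a new fan out of this data and verify it is a weak Fano one. Write $A_i:=\cLL_{i+1}\otimes\cLL_i^{-1}$ with indices cyclic modulo $k$, so that $\dim\Hom(\cLL_i,\cLL_{i+1})=\dim H^0(X_\Pg,A_i)$. Telescoping $\cLL_{i+k}=\cLL_i\otimes\cKK^{-1}$ over one period gives $A_1+\dots+A_k=\cKK^{-1}$, and one checks, using Hirzebruch--Riemann--Roch and the exceptionality axioms, that the $A_i$ form a toric system: $A_i\cdot A_{i+1}=1$ and $A_i\cdot A_j=0$ for $i,j$ non-adjacent modulo $k$. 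Since $i\le i+1<i+k$, the first axiom forces $H^1(X_\Pg,A_i)=H^2(X_\Pg,A_i)=0$, so with $\chi(\cO_{X_\Pg})=1$,
\[
\dim\Hom(\cLL_i,\cLL_{i+1})=\chi(A_i)=1+\tfrac12\,A_i\cdot\bigl(A_i+c_1(\cKK^{-1})\bigr)=1+\tfrac12\bigl(A_i^2+A_i\cdot(A_1+\dots+A_k)\bigr)=A_i^2+2,
\]
because $A_i\cdot\sum_jA_j=A_i^2+A_i\cdot A_{i-1}+A_i\cdot A_{i+1}=A_i^2+2$. Hence $b_i=A_i^2$, and the statement reduces to: the self-intersection sequence of a toric system coming from a cyclic full strongly exceptional sequence on a toric weak Fano surface is the sequence of a weak Fano surface.

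Next I would reconstruct a fan from $(A_i)$. Let $\psi\colon\Z^k\to\Pic(X_\Pg)$, $e_i\mapsto A_i$. As $(\cLL_i)$ is full the $A_i$ generate $\Pic(X_\Pg)$, which is free of rank $k-2$, so $\psi$ is surjective and $\Lambda:=\ker\psi$ is a saturated sublattice of rank $2$. Pairing a relation $\sum_ix_iA_i=0$ with each $A_j$, and using that the intersection form on $\Pic(X_\Pg)$ is non-degenerate while the $A_j$ generate, one finds $\Lambda=\{x\in\Z^k:\ x_{i-1}+b_ix_i+x_{i+1}=0\text{ for all }i\}$, i.e. $\Lambda$ is exactly the space of $k$-periodic solutions of the recursion attached to $(b_i)$. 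Picking a $\Z$-basis $f_1,f_2$ of $\Lambda$ and putting $w_i:=\bigl((f_1)_i,(f_2)_i\bigr)\in\Z^2$, we get $w_{i-1}+b_iw_i+w_{i+1}=0$ for all $i$, which is exactly the defining relation of an $a$-sequence with $a_i=b_i$. Now the bi-infinite integer solutions of this recursion also form a lattice of rank $2$, so the monodromy $\prod_i\smatrix{0&1\\-1&-b_i}$ must be the identity; consequently every evaluation map $\Lambda\to\Z^2$, $x\mapsto(x_i,x_{i+1})$, is an isomorphism, so each $w_i$ is primitive, each consecutive pair $(w_i,w_{i+1})$ is a $\Z$-basis of $\Z^2$, and $\det(w_i,w_{i+1})$ is a constant of absolute value $1$; after a global $\GL_2(\Z)$-transformation and, if necessary, reversal of the cyclic order, we may take $\det(w_i,w_{i+1})=1$.

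The remaining, and hardest, step is to show that $w_1,\dots,w_k$, read in cyclic order, wind exactly once around the origin. Granting this, $\{0\}\cup\{\cone{w_i}\}\cup\{\cone{w_i,w_{i+1}}\}$ is the fan of a smooth projective toric surface with self-intersection sequence $(b_i)$; it is weak Fano almost for free, since each $b_i$ is some $\dim\Hom(\cLL_a,\cLL_{a+1})-2\ge-2$, so the anticanonical class is nef, and the surface has the same number $k\le 9$ of rays as $X_\Pg$, whence $(-K)^2=12-k>0$ and the anticanonical class is also big; thus the surface is one of the sixteen. The condition $\det(w_i,w_{i+1})=1$ only says consecutive rays turn left, so a priori the winding number is some $W\ge1$, and the work is to exclude $W\ge2$. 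I would approach this by using that the exceptional collection is \emph{ordered}: the ordering should force a monotonicity of the associated classes which translates into convexity of the broken line $w_1,\dots,w_k$ --- this convexity is the content that makes the construction ``an instance of mirror symmetry'', with the $w_i$ appearing as the primitive normals of the Newton polygon of the mirror Landau--Ginzburg potential. A more hands-on alternative is to combine $\sum_ib_i=\sum_iA_i^2=(-K_{X_\Pg})^2-2k=12-3k$ (Noether's formula on $X_\Pg$) with $b_i\ge-2$ to pin down $W=1$. Once winding number $1$ is established the output corresponds to one of the sixteen reflexive polygons, and re-running the construction recovers $(\cLL_i)$ up to cyclic shift and twist by a line bundle, which upgrades the statement to the announced duality.
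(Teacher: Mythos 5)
The paper does not actually prove this theorem: it is stated with attribution to Hille--Perling \cite{perlinghille} and used as input. What the paper does prove, in Section~4, is a reinterpretation and strengthening of it by a completely different route: the exceptional collection is converted, via Bridgeland's tilting theorem \cite{bridgeland} on the total space of $\cKK$, into a consistent dimer model $\qpol$ on a torus whose lattice polygon is reflexive; the sequence $(b_i)$ is identified with the cyclic sequence of arrow counts between consecutive vertices of $\qpol$; and the mirror dimer $\mirror{\qpol}$ of \cite{Bocklandtmirror} is shown to be again weak Fano with $(a_i)$- and $(b_i)$-sequences interchanged, so that $(b_i)^{\qpol}$ is the $a$-sequence of the reflexive polygon of $\mirror{\qpol}$. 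Your argument is essentially Hille and Perling's original one in classical terms: form the toric system $A_i$, use Riemann--Roch and the exceptionality vanishings to show $b_i=A_i\cdot A_i$, recover primitive lattice vectors $w_i$ from the rank-two kernel $\Lambda\subset\Z^k$ of $e_i\mapsto A_i$, and pin down the winding number by Noether's formula. Both routes are valid. Yours is elementary and self-contained, and proves exactly the quoted claim; the paper's requires the dimer machinery of \cite{bridgeland,Bocklandtmirror} but delivers more --- it equips the output surface with its own full strongly exceptional collection and makes the assignment an honest involution, which is the point of the paper.

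One place to tighten your final step: of the two ways you sketch to fix the winding number $W$, only the Noether computation is carried through, and it already suffices on its own. For a cyclically ordered primitive sequence with $\det(w_i,w_{i+1})=1$ one has $\sum_i b_i+3k=12W$, and $\sum_i b_i=(-K_{X_\Pg})^2-2k=12-3k$ forces $W=1$ without invoking $b_i\ge-2$. The bound $b_i\ge-2$ is used afterwards for nefness of $-K$ on the output surface, and bigness follows from $(-K)^2=12-k>0$, which you already know because $X_\Pg$ itself is weak Fano.
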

The origin and interpretation of this new surface seem at first mysterious, but recent developements
in the study of mirror symmetry for punctured Riemann surfaces \cite{abouzaid} and its relations to dimer models
\cite{Bocklandtmirror} shed new light on this. 

To every cyclic full strongly exceptional sequence on a toric weak Fano surface, 
one can associate a consistent dimer model, which is a quiver embedded in a Riemann surface. 
The dimer contains enough information to recover both the surface and
the exceptional collection. More precisely 
the $(a_i)$-sequence and the $(b_i)$-sequence can be determined from the dimer model but not the other way round.
 
A dimer model can be used to define two categories: a Fukaya category and a category of matrix factorizations. 
In \cite{Bocklandtmirror} it is shown that there is a duality on the set of dimer models, 
such that (under certain consistency conditions) the category of matrix factorizations is ($\cA_\infty$)-equivalent
to the Fukaya category of the dual dimer. This duality gives a combinatorial description of mirror symmetry for punctured
Riemann surfaces. 

The main result of this paper is that this duality acts as an involution on set of dimers coming from full strongly exceptional
sequences of line bundles  on weak toric Fano surfaces and interchanges
the $(a_i)$-sequence and the $(b_i)$-sequence.
This is an extension of Hille's and Perling's result in the following way: the dimer duality does not
only associate to a cyclic full strongly exceptional
sequence of line bundles a new toric weak Fano surface but it also equipes it with a 
new full strongly exceptional sequence. Moreover a this process is a duality and therefore
the toric system of the new exceptional sequence will give us the original toric Fano back.

The write-up of the paper is as follows. We start with a little introduction to dimer models, then we explain the phenomenon of dimer
duality and its relation to mirror symmetry. In section \ref{mirrorFano} we apply this to the special situation of weak toric Fano varieties and prove our main theorem.
We end with an illustration of the duality for reflexive polygons with 8 lattice points on the boundary.

\section{Dimer models}

A \emph{quiver} $Q$ is an oriented graph. We denote the set
of vertices by $Q_0$, the set of arrows by $Q_1$ and the maps $h,t$
assign to each arrow its head and tail. Paths are defined in the usual way as sequences of arrows $a_k\cdots a_0$
such that $t(a_i)=h(a_{i+1})$. A path is \emph{cyclic} if its head and tail coincide and a \emph{cycle} is the equivalence class of a cyclic path up to cyclic permutation.
A \emph{trivial path} is just a vertex. The \emph{path algebra} $\C Q$ is the vector
space spanned by the paths with as product the concatenation of paths.

Let $S$ be a compact orientable surface without boundary. A quiver is called \emph{embedded in $S$} if 
the vertices are a subset of $S$ and arrowa can be seen as smooth curves connecting their heads and tails, such that the only 
intersections occur end points. The surface in which a quiver $\qpol$ is embedded will often be denoted by $|\qpol|$.

An embedded quiver is called a \emph{dimer model}
the complement of the arrows is a disjoint union of opend discs, each bounded by a cyclic path of length at least $3$.
We will call the anticlockwise boundary cycles the positive cycles and group them in a set $\qpol_2^+$ and the clockwise cycles will be grouped
in $\qpol_2^-$. Note that the dimer and its surface are completely determined by the sets $\qpol_2^\pm$. 
For more information on dimers we refer to \cite{Broomhead},\cite{Bocklandtcons} and \cite{Kenyon}.

\begin{example}\label{dimex}
We give 3 examples of dimer models. The first 2 are embedded in a torus, 
the last in a double torus.
Arrows and vertices with the same label are identified.
\[
\hspace{.5cm}
\xymatrix@C=.75cm@R=.75cm{
\vtx{1}\ar[r]_{a}&\vtx{2}\ar[d]&\vtx{1}\ar[l]^b\\
\vtx{3}\ar[u]_c\ar[d]^d&\vtx{4}\ar[l]\ar[r]&\vtx{3}\ar[u]^c\ar[d]_d\\
\vtx{1}\ar[r]^{a}&\vtx{2}\ar[u]&\vtx{1}\ar[l]_b
}
\hspace{.5cm}
\xymatrix@C=.4cm@R=.75cm{
\vtx{1}\ar[rrr]_a\ar[dr]&&&\vtx{1}\ar[ld]|x\\
&\vtx{3}\ar[r]\ar[ld]|y&\vtx{2}\ar[ull]\ar[dr]&\\
\vtx{1}\ar[rrr]^a\ar[uu]_b&&&\vtx{1}\ar[ull]\ar[uu]^b
}
\hspace{.5cm}
\xymatrix@C=.75cm@R=.75cm{
\vtx{1}\ar[r]_{a}\ar[d]^{b}&\vtx{1}\ar[r]_b&\vtx{1}\ar[d]_c\\
\vtx{1}\ar[d]^a&&\vtx{1}\ar[d]_d\\
\vtx{1}\ar[r]^{d}&\vtx{1}\ar[r]^c&\vtx{1}\ar[uull]|x
}
\]
\end{example}

The \emph{Jacobi algebra of a dimer model} is the quotient of the path algebra by the ideal generated by relations
of the form $r_a := r_+-r_-$ where $r_+a \in \qpol_2^+$ and $r_-a\in \qpol_2^-$ for some arrow $a\in \qpol_1$:
\[
 \cJA(\qpol) := \frac{\C \qpol}{\<r_a| a \in \qpol_1\>}.
\]
Every Jacobi algebra has a central element $$\ell=\sum_{v\in \qpol_0} c_v$$ 
where for each vertex $c_v$ is a cyclic path with $h(c_v)=t(c_v)=v$ that forms a cycle in $Q_2$.
Using the relations one can show that this is indeed central and does not depend on the cycles we chose the sum.

Fix a dimer model $\qpol$ and denote its universal cover\footnote{The universal cover of $\qpol$ embedded in $S$ is obtained by lifting all arrows and vertices in all possible ways to the universal cover of $S$. This may result in an infinite quiver.} 
by $\tilde \qpol$. 
For any arrow $\tilde a\in \tilde \qpol_1$ 
we can construct its \emph{zig ray} $\pZ_{\tilde a}^+$. This is an infinite path
\[
\dots \tilde a_2\tilde a_1\tilde a_0
\]
such that $\tilde a_0=\tilde a$ and $\tilde a_{i+1}\tilde a_{i}$ sits in a positive cycle if $i$ is even and in a negative cycle if $i$ is odd.
Similarly the \emph{zag ray} $\pZ_{\tilde a}^-$ is the path where  $\tilde a_{i+1}\tilde a_{i}$ sits in a positive cycle if $i$ is odd and in a negative cycle if $i$ is even.
The projection of a zig or a zag ray down to $Q$ will give us a cyclic path because $Q$ is finite. Such a cyclic
path will be called a \emph{zigzag cycle}.
A dimer model is called \emph{zigzag consistent} if for every arrow $\tilde a$ the zig and the zag ray only meet in $\tilde a$:
\[
 (\pZ_{\tilde a}^-)_i= (\pZ_{\tilde a}^+)_j \implies i=j=0.
\]
In example \ref{dimex} the first and third quiver are consistent, while the second 
quiver is not because $(\pZ_{\tilde x}^-)_3= (\pZ_{\tilde x}^+)_3=\tilde y$.

For dimer models on a torus there are 2 extra characterizations of consistency, which we will use later on.
\begin{theorem}[Bocklandt\cite{Bocklandtcons}]
For a dimer model $\qpol$ on a torus the following are equivalent:
\begin{itemize}
 \item $\qpol$ is zigzag consistent.
 \item $\qpol$ admits a consistent $\cR$-charge.
This is a map $\cR:\qpol_1\to (0,2)$ such that every positive or negative cycle has degree $2$ and for every vertex $v$
we have
\[
 \sum_{h(a)=v} (1-\cR_a) + \sum_{t(a)=v} (1-\cR_a) = 2
\]
 \item $\cJA(\qpol)$ embeds in $\widehat{\cJA}(\qpol) := \cJA(\qpol)\otimes_{\C[\ell]}\C[\ell,\ell^{-1}]$.
\end{itemize}
\end{theorem}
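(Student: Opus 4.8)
(Writing $(1),(2),(3)$ for the three conditions in the order displayed, the plan is to prove $(2)\Rightarrow(1)$, $(1)\Leftrightarrow(3)$, and $(1)\Rightarrow(2)$; the last implication carries essentially all the difficulty.)

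\emph{A consistent $\cR$-charge forces zigzag consistency.} Work on the universal cover $\R^2\to|\qpol|$ of the torus. An $\cR$-charge lets one build a ``rhombic'' immersion $\iota\colon\tilde\qpol\to\R^2$ by fixing a reference direction and turning, along each arrow $a$, through an angle read off from $\tfrac{\pi}{2}\cR_a$: the condition that every positive (resp.\ negative) cycle has $\cR$-degree $2$ makes each face close up, bounding a cyclic polygon traversed counterclockwise (resp.\ clockwise), and the vertex identity $\sum_{a\ni v}(1-\cR_a)=2$ ensures the faces fit together without gaps or overlaps around any vertex, so that $\iota$ is a local homeomorphism. A short angle computation with the same two conditions shows that a zig ray $\pZ_{\tilde a}^+$ and a zag ray $\pZ_{\tilde a}^-$ are each carried by $\iota$ to a \emph{straight}, monotone path --- the $\pm(1-\cR)$ turns contributed on either side of each intermediate vertex cancel --- and that the two have different asymptotic directions. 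Since distinct lines in $\R^2$ meet in at most one point while both rays pass through the vertex $t(\tilde a)$, and a monotone straight path never returns to its starting vertex, $(\pZ_{\tilde a}^-)_i=(\pZ_{\tilde a}^+)_j$ forces $i=j=0$; this is exactly zigzag consistency.

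\emph{Zigzag consistency $\Leftrightarrow$ $\ell$ is a non-zero-divisor.} First, $\cJA(\qpol)\hookrightarrow\cJA(\qpol)\otimes_{\C[\ell]}\C[\ell,\ell^{-1}]$ holds precisely when $\cJA(\qpol)$ has no $\ell$-torsion, i.e.\ when the central element $\ell$ is a non-zero-divisor. For $(\Rightarrow)$: zigzag consistency makes $\cJA(\qpol)$, by Broomhead's structure theory, module-finite and maximal Cohen--Macaulay over its centre $Z$, a three-dimensional normal toric domain; being torsion-free over $Z$, $\cJA(\qpol)$ inherits the fact that the nonzero element $\ell$ of the domain $Z$ acts injectively. (Alternatively, a direct analysis on the universal cover, where consistency makes the path combinatorics transparent, exhibits $\ell$ acting injectively on each $e_v\cJA(\qpol)e_w$.) For $(\Leftarrow)$, argue contrapositively: if for some $\tilde a$ the rays $\pZ_{\tilde a}^\pm$ meet again at a vertex $\tilde b\neq\tilde a$, the two path-segments cut out between the common endpoints are distinct in $\cJA(\qpol)$ yet become proportional after inverting $\ell$, so $\ell$ is a zero-divisor.

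\emph{From zigzag consistency to an $\cR$-charge --- the main obstacle.} With $(1)\Leftrightarrow(3)$ in hand, everything reduces to manufacturing a consistent $\cR$-charge from zigzag consistency, which is the hard part. One must find a point of the rational polytope
\[
 \bigl\{\,\cR\colon\qpol_1\to\R\ :\ \textstyle\sum_{a\in c}\cR_a=2\ (c\in\qpol_2^\pm),\quad \sum_{a\ni v}(1-\cR_a)=2\ (v\in\qpol_0),\quad 0<\cR_a<2\,\bigr\}
\]
and show that zigzag consistency makes it non-empty. I would approach this by one of two standard but non-trivial routes. Geometrically: these constraints are exactly the angle conditions for an immersed isoradial realization of the dimer on the torus in which the zigzag paths appear as the ``trains'', and a Kenyon--Schlenker-type construction produces such a realization once the trains are simple closed curves crossing one another minimally and their homology classes lie in the required configuration --- properties that zigzag consistency supplies. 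Via linear programming: the polytope is empty only if some nonnegative combination of the defining equalities and strict inequalities is infeasible, and one converts such a Farkas certificate into a closed zig/zag configuration contradicting consistency. Converting zigzag consistency into an actual solution of this linear system --- equivalently, into the isoradial immersion --- is the crux; the remaining implications are comparatively formal.
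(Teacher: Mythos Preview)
This theorem is not proved in the present paper: it is quoted verbatim as a result from \cite{Bocklandtcons} and used as a black box, so there is no proof here to compare your proposal against. What you have written is a plausible outline of the argument carried out in that reference (and in related work of Broomhead, Ishii--Ueda, and Kenyon--Schlenker), with the correct identification of $(1)\Rightarrow(2)$ as the substantive step and the isoradial-embedding/train-track picture as the right mechanism.

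One point where your sketch is genuinely thin is the contrapositive $(\neg 1)\Rightarrow(\neg 3)$. Saying that the two sub-paths between the re-meeting points of $\pZ_{\tilde a}^\pm$ ``are distinct in $\cJA(\qpol)$ yet become proportional after inverting $\ell$'' is not automatic: you need to actually exhibit a nonzero element of $\cJA(\qpol)$ annihilated by a power of $\ell$, and the naive difference of the two sub-paths need not do this without further work (they may already coincide in $\cJA(\qpol)$, or their difference may not be $\ell$-torsion without a more careful bookkeeping of perfect-matching degrees). The actual argument in \cite{Bocklandtcons} goes through a finer analysis of ``F-term equivalent'' paths and the combinatorics of the zigzag crossing, rather than the one-line claim you give. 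The other implications in your outline are essentially the standard ones.
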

\begin{remark}
We can see $\widehat{\cJA}(\qpol)$ as the algebra in which all arrows get an inverse because 
$a^{-1}=p\ell^{-1}$ if $ap$ is a positive cycle.
We will call paths and cycles in this algebra \emph{weak paths} and \emph{weak cycles}.
If we want to stress that a path is not weak, we will call it \emph{real}.
\end{remark}

Another ingredient we need are \emph{perfect matchings}. These are subsets $\PM\subset \qpol_1$ such
that every positive and negative cycle contains precisely one arrow of $\PM$.
Every perfect matching can also be seen as a degree function on $\widehat{\cJA}(\qpol)$ 
that gives $a$ degree $1$ if $a\in \PM$ and degree $0$ otherwise. We will write $\PM(p)$ for the degree of a weak path $p$.
Note that for the element $\ell$ we have $\PM(\ell)=1$. 

Fix a vertex $\triv$, which we will call the trivial vertex, and two weak cycles $x,y$ that span the homology 
of the torus.
To every perfect matching $\PM$ we can associate a point $(\PM(x),\PM(y),\PM(\ell))\in \Z^3$. 
A set of matchings $\{\PM_1,\dots,\PM_u\}$ is called $\triv$-stable\footnote{The notion of $\triv$-stable coincides with $\theta$-stable if $\theta$ is negative on $\triv$ and positive on all other vertices.} 
if there is a real path from
$\triv$ to every other vertex that has $\PM_i$-degree zero for all matchings in the set.
For each stable set $S$ we can define a cone $\sigma_S=\sum_{\PM \in S} \R^+(\PM(x),\PM(y),\PM(\ell))$

The technique of perfect matchings can be used to relate dimer models to the geometry of crepant resolutions of Gorenstein singularities.
\begin{theorem}[Ishii-Ueda\cite{IU}-Mozgovoy-Bender\cite{Moz,MB}]\label{pmsmain}
If $\qpol$ is a consistent dimer on a torus then
\begin{enumerate}
 \item The collection of cones $\sigma_S$ where $S$ is a stable set of matchings forms a fan and the toric variety 
of this fan, $\tilde X$ is a crepant resolution of $X=\Spec Z(A)$.
 \item If we intersect the fan with the plane at height $z=1$, we get a convex polygon $\Pg$, which is subdivided
in elementary triangles and on each integral point of the polygon sits a unique stable perfect matching.
These lattice points form a basis for the toric divisors of $\tilde X$. So any line $\Z$-linear combination
of stable perfect matchings gives us a line bundle.
 \item Fix a set of paths $\{p_v\}$ from $\triv$ to every other vertex $v$.
The direct sum $\cTT$ of the line bundles $\cLL_v$ with divisors $\sum \PM_i(p_v)\PM_i$ (the sum runs over all stable perfect matchings)
is a tilting bundle on $\tilde X$ and $\End(\cTT)=\cJA(\qpol)$.  
\end{enumerate}
\end{theorem}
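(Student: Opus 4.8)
The plan is to recover the three statements --- due to Ishii--Ueda and Mozgovoy--Bender --- in the order below, using consistency essentially at only one point. First I would describe the centre of $A:=\cJA(\qpol)$ combinatorially. By the last characterisation of consistency above, $A$ embeds into its localisation $\widehat A:=\widehat{\cJA}(\qpol)$, in which every arrow is invertible, so a weak cycle at a fixed vertex is, modulo the Jacobi relations, exactly a Laurent monomial $x^ay^b\ell^c$ with $(a,b,c)\in\Z^3$, its perfect-matching degree is independent of the base vertex, and $Z(\widehat A)$ is the Laurent ring in $x,y,\ell$. The lemma to invoke here is the \emph{reality criterion}: in a consistent dimer a weak path lies in the image of $A$ iff all its perfect-matching degrees are non-negative. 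Since $Z(A)=Z(\widehat A)\cap A$, this identifies $Z(A)$ with the semigroup ring of $\{(a,b,c)\in\Z^3:a\PM(x)+b\PM(y)+c\ge 0\text{ for every perfect matching }\PM\}$, so $X=\Spec Z(A)$ is an affine toric $3$-fold whose cone is spanned by the lattice points $(\PM(x),\PM(y),\PM(\ell))$. Because $\PM(\ell)=1$ for every $\PM$, these generators all lie at height one, whence $X$ is Gorenstein and the height-one slice of the cone is the lattice polygon $\Pg=\mathrm{conv}\{(\PM(x),\PM(y))\}$.

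Next I would introduce the fine moduli space $\cM_\theta$ of $\theta$-stable $A$-modules of dimension vector $(1,\dots,1)$, with $\theta<0$ on $\triv$ and $\theta>0$ elsewhere --- i.e. the GIT quotient $\Rep(A,(1,\dots,1))\quot_\theta\prod_{v\in\qpol_0}\C^*$ --- which carries a torus action and, via $Z(A)\subset A$, a projective morphism to $X$. Following Ishii--Ueda I would argue that stability forces a stable representation to be generated by its component at $\triv$, that such a representation is controlled by a perfect matching (the arrows it kills), and that the torus fixed point attached to a maximal $\triv$-stable set $S$ of matchings has an affine toric chart with cone $\sigma_S$; using a consistent $\cR$-charge one shows each $\sigma_S$ is unimodular, so $\cM_\theta=\tilde X$ is a smooth toric variety with fan $\{\sigma_S\}$. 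Its rays are the cones $\sigma_{\{\PM\}}$ of stable singleton matchings, all of whose generators lie at height one, so $\tilde X\to X$ is crepant, which is (1); tracing the resulting elementary triangulation of $\Pg$ back to matchings yields the bijection between lattice points of $\Pg$ and stable matchings, and hence (2).

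For (3) I would work directly on the smooth toric $\tilde X$, where the stable matchings $\PM_i$ are the torus-invariant prime divisors. A choice of real paths $\{p_v\}$ from $\triv$ then gives line bundles $\cLL_v:=\cO_{\tilde X}(\sum_i\PM_i(p_v)\PM_i)$, well defined up to isomorphism (another choice changes the divisor by a principal one), and I set $\cTT:=\bigoplus_v\cLL_v$. The toric description of global sections identifies $\Hom(\cLL_v,\cLL_w)$ with the monomials $m$ satisfying $\PM_i(m)+\PM_i(p_w)-\PM_i(p_v)\ge 0$ for all $i$, which by the reality criterion biject with the real paths $v\to w$; so $\Hom(\cLL_v,\cLL_w)\cong e_wAe_v$ compatibly with composition and $\End(\cTT)\cong A$. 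To see that $\cTT$ is tilting I would invoke Van den Bergh's theorem that a crepant resolution of a Gorenstein $3$-fold is derived equivalent, via a tilting bundle, to any non-commutative crepant resolution of it, the hypothesis holding because the Jacobi algebra of a consistent torus dimer is a non-commutative crepant resolution of its centre, being $3$-Calabi--Yau of finite global dimension; concretely $\cTT$ is the universal bundle on $\cM_\theta=\tilde X$, which gives (3).

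The hard part will be the smoothness and crepancy of $\tilde X$ in the second step: proving that the only torus fixed points of $\cM_\theta$ come from $\triv$-stable sets of matchings, that each cone $\sigma_S$ is unimodular, and that the charts glue to a separated fan whose support is the cone of $X$ --- equivalently, that $A$ is a non-commutative crepant resolution of its centre and that $\cM_\theta$ realises the corresponding geometric resolution. This is exactly where zigzag consistency, the $\cR$-charge and algebraic consistency are all needed; everything else above is bookkeeping once this is established.
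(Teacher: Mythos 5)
The paper does not prove this theorem: it is quoted verbatim as a result of Ishii--Ueda and Mozgovoy--Bender, with the citations \cite{IU,Moz,MB} doing the work, and the paper only proves the downstream Lemma~\ref{pmscor}. Your reconstruction is therefore not to be compared against anything in the text, but it is a faithful sketch of the strategy in those references: identifying $Z(A)$ as a semigroup ring via the embedding $A\hookrightarrow\widehat A$ and the ``real iff all matching degrees nonnegative'' criterion; realizing $\tilde X$ as the fine moduli $\cM_\theta$ of $\theta$-stable $(1,\dots,1)$-dimensional representations with $\theta$ negative on $\triv$ and positive elsewhere (which is precisely the paper's notion of $\triv$-stability, as its footnote states); matching maximal $\triv$-stable sets of matchings to torus-fixed points and the charts $\sigma_S$; and using the universal bundle together with Van den Bergh's NCCR machinery to get the tilting bundle with the Jacobi algebra as endomorphism ring.

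Two small cautions on the details you flag as ``hard.'' First, the smoothness of $\cM_\theta$ (unimodularity of the $\sigma_S$) is not usually deduced from the $\cR$-charge directly; Mozgovoy--Bender obtain it from the structure of maximal $\triv$-stable sets (they have exactly three elements forming an elementary triangle), and Ishii--Ueda argue via properties of the NCCR. The $\cR$-charge characterization of consistency is used elsewhere (e.g.\ for the Calabi--Yau property and for the algebraic-consistency equivalence). Second, the passage from ``stable matchings biject with lattice points of $\Pg$'' to statement~(2) needs the non-trivial fact that \emph{every} boundary and interior lattice point of $\Pg$ is hit by a stable matching, which again relies on consistency; you implicitly assume this when you say the fan's support is the cone of $X$. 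These are genuine gaps in the sketch but they are exactly the points proved in \cite{IU,Moz,MB}, so the overall plan is sound.
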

This theorem has some implications which we will need further on.

\begin{lemma}\label{pmscor}
If $\qpol$ is a consistent dimer on a torus then
\begin{enumerate}
 \item The number of vertices in $\qpol$ is the number of elementary triangles in $\Pg$. 
 \item The number of zigzag paths in $\qpol$ is the number of elementary line segments on the boundary of $\Pg$.
More precisely, if \{$\PM_1,\PM_2\}$ is a stable set of two perfect matchings on the boundary then 
the arrows contained in either $\PM_1$ or $\PM_2$ but not in both form a zigzag path.
All zigzag paths arise in this way.
 \item For a given arrow $a$, the stable perfect matchings on the boundary that contain $a$ 
are precisely the ones that lie inbetween
the elementary line segments of its zig and its zag path. 
 \item For each weak path $p$ in $\qpol$ we can split the stable matchings in two sets $M^+=\{\PM|\PM(p)\ge 0\}$ 
and $M^-:=\{\PM|\PM(p)< 0\}$. The subset of triangles, line segments and lattice points spanned by lattice points
in $\Pg$ that correspond to stable sets in $M^\pm$ form a simply connected simplicial set.
  \item For each weak path $p$ in $\qpol$ we can split the stable \emph{boundary matchings} in two sets $B^\pm:=M^\pm \cap \partial \Pg$. The subset of line segments and lattice points spanned by lattice points
in $\partial \Pg$ that correspond to stable sets in $B^\pm$ form a connected simplicial set (i.e a circle segment or the whole circle).
\end{enumerate}
\end{lemma}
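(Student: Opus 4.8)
My plan is to take parts (1)--(4) as already established --- parts (1)--(3) are bookkeeping consequences of Theorem~\ref{pmsmain} together with the standard description of zigzag paths in terms of the boundary of $\Pg$ (cf.\ \cite{Bocklandtcons,IU,Broomhead}) --- and to derive part~(5) from part~(4) by a planarity argument.

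First I would set up the input. By Theorem~\ref{pmsmain}(2), $\Pg$ is a convex lattice polygon, hence a closed topological disc, triangulated into elementary triangles, with boundary circle $\partial\Pg$, and carrying a unique stable matching on each lattice point; the stable boundary matchings are partitioned as $B^+\sqcup B^-$ according to the sign of $\PM(p)$. Let $K^+$ and $K^-$ be the subcomplexes of $\Pg$ spanned, respectively, by the lattice points of $M^+$ and of $M^-$. A simplex of $\Pg$ belongs to $K^+$ exactly when all of its vertices lie in $M^+$, and to $K^-$ exactly when all lie in $M^-$; as $M^+\cap M^-=\emptyset$, the complexes $K^+$ and $K^-$ are disjoint. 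Intersecting with the boundary, $K^+\cap\partial\Pg$ and $K^-\cap\partial\Pg$ are exactly the subcomplexes of $\partial\Pg$ spanned by $B^+$ and by $B^-$. So part~(5) amounts to showing that $B^+$, and symmetrically $B^-$, spans a connected subcomplex of the circle $\partial\Pg$.

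For the main argument I would reason by contradiction. If $B^+$ or $B^-$ is empty there is nothing to prove, so assume both are nonempty and that the subcomplex spanned by $B^+$ is disconnected. Then there are lattice points $x_1,x_2\in B^+$ and $y_1,y_2\in B^-$ appearing in cyclic order $x_1,y_1,x_2,y_2$ along $\partial\Pg$. Since $K^+$ is connected (part~(4)), it contains a path $\gamma$ from $x_1$ to $x_2$; likewise $K^-$ contains a path $\delta$ from $y_1$ to $y_2$. Disjointness of $K^+$ and $K^-$ makes $\gamma$ and $\delta$ disjoint. But $\gamma$ and $\delta$ are arcs in the disc $\Pg$ whose endpoints interleave along the boundary circle, so the Jordan curve theorem forces them to meet --- a contradiction. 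Hence $B^+$ spans a connected subcomplex, i.e.\ a circle segment or all of $\partial\Pg$, and the argument for $B^-$ is identical.

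The only step that is not purely formal is the appeal to the Jordan curve theorem, and that is where I expect the (minor) work to lie: the clean form of that theorem wants $\gamma$ and $\delta$ to meet $\partial\Pg$ only at their endpoints, whereas $\gamma$ could run along boundary edges of $K^+$. This is harmless, since the part of $K^+$ on the boundary is the union of the $B^+$-arcs --- disjoint closed arcs of $\partial\Pg$, none containing $y_1$ or $y_2$ --- so one can either trim $\gamma$ and push its boundary contacts into small neighbourhoods of $x_1,x_2$, or pass to the quotient of $\Pg$ obtained by collapsing each $B^+$-arc, each $B^-$-arc and each ``mixed'' boundary edge to a point. I do not anticipate any other obstacle: the real content of part~(5) is simply that $\Pg$ is a disc and that, by part~(4), $M^+$ and $M^-$ carve it into \emph{disjoint connected} subcomplexes, which is exactly what stops the $\pm$-colouring of $\partial\Pg$ from oscillating around the circle.
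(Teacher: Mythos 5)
Your proof is correct and takes the same approach as the paper: the paper's entire argument for part~(5) is the single assertion ``if $B^+$ is not connected then either $M^+$ or $M^-$ is not connected,'' and you supply precisely the Jordan-curve separation argument inside the disc $\Pg$ that justifies it, handling the boundary-contact technicality the paper leaves implicit.
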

\begin{proof}
The first statement follows because the derived equivalence gives an equality between the ranks of the Grotendieck groups, 
which for the quiver is the number of vertices and for the crepant resolution the number of elementary triangles 
needed to subdivide $\Pg$.

A proof of the second and third statement can be found in \cite{BCQ}.

The fourth statement can be proved using simplicial homology. Let $p$ be any weak path in the dimer with $h(p)=v$ and $t(p)=w$ then
$\sum \PM_i(p)\PM_i$ is a line bundle equivalent to $\cLL_v\cLL_w^{-1}$. Because $\cTT$ is tilting we have that
$\cLL_v\cLL_w^{-1}$ has no higher homology. The homology can be calculated from the complex $\cF^\bullet,\delta$
with
$$\cF^r := \bigoplus_{|S|=r}\C[X^{m_1}Y^{m_2}Z^{m_3}| m_1\PM(x)+m_2\PM(y)+m_3\ge -\PM(p) \forall \PM \in S]$$
where the sums are over the stable sets of matchings and $$\delta (X^{m_1}Y^{m_2}Z^{m_3})_S=\sum_{\PM \in S} \pm (X^{m_1}Y^{m_2}Z^{m_3})_{S\setminus\{\PM\}}$$
is the boundary map between these simplicial sets. 

If we look at the summand corresponding to $X^{m_1}Y^{m_2}Z^{m_3}=1$ we get that this complex calculates the 
simplicial homology of 
the simplicial subcomplex containing only the $S\subset M^+$. This is acyclic if and only if
the subcomplex is simply connected. To get the statement for $M^-$ we need to look at the weak path $p^{-1}\ell^{-1}$.

The fifth statement is an easy consequence of the fourth: if $B^+$ is not connected then either $M^+$ or $M^-$ is not connected. 
\end{proof}

\section{Mirror symmetry for dimers}

In general homological mirror symmetry conjectures an equivalence
between two categories: one constructed from algebraic geometry and one from symplectic geometry.
In the case we will be considering both categories can be constructed explicitely from a dimer model.
In this section we will summarize the main results of \cite{Bocklandtmirror}.

\subsection{The Fukaya category}

If $\qpol$ is a dimer model we can look at its wrapped Fukaya category\footnote{For more information on general wrapped Fakaya categories we refer to \cite{abouzaidwrapped}.}.  
Objects in this category are the arrows of the quiver which we consider as Langrangian submanifolds
of the underlying surface punctured by the vertices.
Morphisms between Lagrangians are given by time-one flow curves of a hamiltonian flow on the surface that
connect these Lagrangians. The products are given by counting certain maps from the disc to the surface
such that the boundary of this disc lies on the Lagrangians and the time-one flow curves.
This produces an $\cA_\infty$-category $\fuk(\qpol)$.

\subsection{Matrix factorizations}

To each arrow $a$ in a dimer we can associate a matrix factorization of $\ell\in \cJA(\qpol)$, 
which is a diagram of the form
\[
\bar P_a :=  \xymatrix{\cJA(\qpol)h(a) \ar@<.5ex>[r]^{a}&\cJA(\qpol)t(a) \ar@<.5ex>[l]^{\bar a}}
\]
where $\bar a$ is defined such that $a\bar a\in \qpol_2^+$. Note that $\bar P_a$ can also be seen as a
$\Z_2$ graded projective $\cJA(\qpol)$-module with a curved differential $d$ such that $d^2=\ell$.

Given $2$ such matrix factorizations $\bar P$, $\bar Q$, the space $\Hom_{\cJA(\qpol)}(\bar P,\bar Q)$ becomes
equiped with an ordinary (noncurved) differential $\delta$. The category $\H\mf(\qpol)$ contains as objects the matrix factorizations $\bar P_a$ and
as hom-spaces the homology of $\delta$ on $\Hom(\bar P,\bar Q)$. The dg-structure on the $\Hom_{\cJA(\qpol)}(\bar P,\bar Q)$
can be turned into an $\cA_\infty$-structure on $\H\mf(\qpol)$. 

\subsection{Dimer duality}

The two categories we defined above can be related by a certain duality on the level of dimers.
Let $\qpol$ be any, not necessarily consistent, dimer. 
We define its mirror dimer $\mirror{\qpol}$ as follows

\begin{enumerate}
 \item The vertices of $\mirror{\qpol}$ are the zigzag cycles of $\qpol$.
 \item The arrows of $\mirror{\qpol}$ are the arrows of $\qpol$, $h(a)$ is the zigzag cycle coming from the zig ray, and $t(a)$
is the cycle coming from the zag ray.
 \item The positive faces of $\mirror{\qpol}$ are the positive faces of $\qpol$.
 \item The negative faces of $\mirror{\qpol}$ are the negative faces of $\qpol$ in reverse order. 
\end{enumerate}
Some relevant examples of dimer duality can be found in the table of the last section of the paper.
\begin{remark}
The dual can also be obtained by cutting the dimer along the arrows, flipping over the clockwise faces, reversing their arrows and gluing everything back again.
This construction is basically the same construction that was introduced by Feng, He, Kennaway and Vafa in 
\cite{quivering} applied to all possible dimers. It is also important to note that the genus of the surface in which the dual dimer lives often differs 
from the genus of the surface of the original dimer.
\end{remark}

\begin{theorem}\cite{Bocklandtmirror}
If $\qpol$ is a consistent dimer then the categories $\H\mf(\qpol)$ and $\fuk(\mirror{\qpol})$ are
$\cA_\infty$-isomorphic.
\end{theorem}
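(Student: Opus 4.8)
The plan is to construct the $\cA_\infty$-isomorphism on the canonical generators of both sides and to reduce it to the combinatorial content of the definition of $\mirror{\qpol}$. Both categories are indexed by $\qpol_1$: on the algebraic side the objects are the curved modules $\bar P_a$, while on the symplectic side the objects are the arrows of $\mirror{\qpol}$, which form the same set $\qpol_1$ but are now viewed as properly embedded Lagrangian arcs in the surface $|\mirror{\qpol}|$ punctured at the vertices of $\mirror{\qpol}$, i.e. at the zigzag cycles of $\qpol$. So I would first fix this bijection on objects and reduce the theorem to showing that the $\cA_\infty$-algebras $\bigoplus_{a,b}\Hom_{\H\mf(\qpol)}(\bar P_a,\bar P_b)$ and $\bigoplus_{a,b}\Hom_{\fuk(\mirror{\qpol})}(a,b)$ are $\cA_\infty$-isomorphic --- concretely, that each is $\cA_\infty$-isomorphic (after passing to a minimal model on the Fukaya side) to one and the same combinatorial $\cA_\infty$-category attached to the dimer $\qpol$.

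Next I would give a combinatorial model of the matrix-factorization side. The complex $\Hom_{\cJA(\qpol)}(\bar P_a,\bar P_b)$ is $2$-periodic with terms built from $\cJA(\qpol)h(a)$ and $\cJA(\qpol)t(a)$, so its chains are spanned by real paths between the endpoints of $a$ and $b$; passing to $\widehat{\cJA}(\qpol)$ and using zigzag consistency (the embedding $\cJA(\qpol)\hookrightarrow\widehat{\cJA}(\qpol)$) one shows that the $\delta$-homology has a basis in bijection with the finitely many ways the zig and zag rays of $a$ and $b$ first interleave. The $\cA_\infty$-products are then obtained by transporting the dg-structure to homology via homological perturbation, using a contracting homotopy chosen to follow the zig and zag rays; the resulting $m_d(\varphi_1,\dots,\varphi_d)$ are sums of compositions in $\widehat{\cJA}(\qpol)$ which, once the factors $\ell$ produced by $d^2=\ell$ cancel, assemble into immersed polygons in $|\qpol|$ whose boundary alternates between the arcs $a_0,\dots,a_d$ and boundary segments of the positive and negative faces of $\qpol$. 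Lemma \ref{pmscor}, together with Theorem \ref{pmsmain}, is what pins down which polygons occur.

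Then I would compute the Fukaya side and check that it yields the same data. For a punctured surface, $\Hom_{\fuk}(a,b)$ is spanned by the time-one Hamiltonian chords from the arc $a$ to the arc $b$, and these are enumerated by how $a$ and $b$ wind past the punctures of $|\mirror{\qpol}|$ --- the same interleaving of zigzag cycles as in the previous step, since items (1)--(2) of the definition of $\mirror{\qpol}$ record exactly this. The products count rigid holomorphic polygons; on a surface the moduli spaces are discrete and each contributing disc is an immersed polygon with corners at the chosen chords and boundary running along the arcs and the faces of $\mirror{\qpol}$. By items (3)--(4) the faces of $\mirror{\qpol}$ are the faces of $\qpol$ with the negative ones orientation-reversed, so these polygons are in a natural sign-compatible bijection with the ones on the matrix-factorization side. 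Hence both categories are $\cA_\infty$-isomorphic to the combinatorial $\cA_\infty$-category of the dimer, and the theorem follows.

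The main obstacle is making the second step rigorous: one must produce a genuinely term-by-term combinatorial model of the $\cA_\infty$-structure on $\H\mf(\qpol)$, choosing the perturbation homotopy compatibly enough with the zigzag rays that the sums over binary trees of compositions collapse exactly to immersed polygons, and then match this with the holomorphic polygon count, verifying that no disc bubbling, wrapping along the cylindrical ends, or higher-genus phenomena spoil the enumeration. One also has to check that zigzag consistency is precisely the hypothesis making both counts finite and making the two tessellations agree, and to carry the curvature term $d^2=\ell$ across the mirror: algebraically it is the central element $\ell$, geometrically it must become the loop wrapping once around each puncture of $|\mirror{\qpol}|$, so that $\fuk(\mirror{\qpol})$ is genuinely uncurved $\cA_\infty$. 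Orientation and sign bookkeeping for the $\cA_\infty$-relations on both sides is the remaining lengthy but routine task.
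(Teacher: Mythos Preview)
The paper does not prove this theorem; it merely quotes it from \cite{Bocklandtmirror} and uses it as input for the results in Section~\ref{mirrorFano}. So there is no proof here to compare your proposal against.

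That said, your outline is a reasonable high-level description of the strategy one expects in the cited paper: identify the generating objects on both sides with $\qpol_1$, give a combinatorial basis for the morphism spaces in $\H\mf(\qpol)$ via zigzag rays, compute the wrapped Floer chords and polygon counts on the punctured surface $|\mirror{\qpol}|\setminus\mirror{\qpol}_0$, and match the two via the definition of $\mirror{\qpol}$. But as you yourself flag, your second step is only a sketch: you have not actually produced the homotopy needed for the perturbation lemma, nor verified that the resulting $m_d$ collapse to immersed polygons, nor carried out the sign analysis. These are precisely the technical core of the argument in \cite{Bocklandtmirror}, and without them what you have written is a plausibility argument rather than a proof. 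Your invocation of Lemma~\ref{pmscor} and Theorem~\ref{pmsmain} is also misplaced: those results concern perfect matchings and the polygon $\Pg$ for dimers on a torus, whereas the theorem is stated for arbitrary consistent dimers and the matching of $\cA_\infty$-structures does not go through the matching polytope.
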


An $\cA_\infty$-category can be completed by adding twisted objects, which are morally complexes of the old ones (see \cite{Keller}).
We also need to add in projectors to ensure the category is idempotent complete.
If we apply this process to both categories above we get two categories $\cD^\pi\Fuk(\qpol)$ and $\cD^\pi\mf(\qpol)$.
The first one only depends on the surface in which $\qpol$ is embedded and the number of punctures (=vertices in $\qpol$).
We call it the the idempotent completed wrapped Fukaya category of the punctured surface.

If $\qpol$ sits on a torus then, by a theorem of Ishii and Ueda \cite{IU}, the second category is equivalent to the idempotent completion of
a category of singularities 
\[
 \cD\mathtt{Sing} f^{-1}(0) := \frac{\cD^b\Coh f^{-1}(0)}{\mathtt{Perf} f^{-1}(0)}
\]
where $f:\tilde X\to \C:p\mapsto \ell(p)$ and $\tilde X$ is a crepant resolution of $\Spec Z(\cJA(\qpol)$.

In this way we recover a the following version of mirror symmetry
\begin{theorem}\cite{Bocklandtmirror}
If $\qpol$ is a consistent dimer on a torus and then the idempotent completion of $\cD^b\mathtt{Sing} f^{-1}(0)$ is 
$\cA_\infty$-isomorphic to the idempotent completed wrapped Fukaya category of a surface with genus $\frac 12(2-\#\mirror{\qpol}_0 +\#\qpol_0)$
and $\#\mirror{\qpol}_0$ punctures.
\end{theorem}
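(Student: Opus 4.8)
The plan is to prove the statement purely by assembling ingredients already recorded above: the tilting description of $\cJA(\qpol)$ from Theorem~\ref{pmsmain} together with the Ishii--Ueda equivalence of \cite{IU}, the mirror $\cA_\infty$-isomorphism $\H\mf(\qpol)\cong\fuk(\mirror{\qpol})$ of \cite{Bocklandtmirror}, and the fact that the idempotent-completed wrapped Fukaya category of a punctured surface depends only on its topology. After chaining these three facts, all that is left is an Euler-characteristic count that produces the genus.

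Concretely, I would argue as follows. By Theorem~\ref{pmsmain}, $\cJA(\qpol)=\End(\cTT)$ for a tilting bundle $\cTT$ on a crepant resolution $\tilde X$ of $\Spec Z(\cJA(\qpol))$, and $f=\ell$ is the function whose central fibre is $f^{-1}(0)$; by \cite{IU} this identifies the idempotent completion of $\H\mf(\qpol)$ with that of $\cD\mathtt{Sing}\, f^{-1}(0)$. Since $\qpol$ is consistent, \cite{Bocklandtmirror} gives an $\cA_\infty$-isomorphism $\H\mf(\qpol)\cong\fuk(\mirror{\qpol})$; passing to twisted objects and then to idempotent completions is functorial and preserves $\cA_\infty$-isomorphisms, so $\cD^\pi\mf(\qpol)\simeq\cD^\pi\Fuk(\mirror{\qpol})$, and hence the idempotent completion of $\cD^b\mathtt{Sing}\, f^{-1}(0)$ is $\cA_\infty$-isomorphic to $\cD^\pi\Fuk(\mirror{\qpol})$. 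Finally, by the quoted fact $\cD^\pi\Fuk(\mirror{\qpol})$ depends only on the genus of $|\mirror{\qpol}|$ and on $\#\mirror{\qpol}_0$, the number of vertices of $\mirror{\qpol}$, which are exactly its punctures.

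It then remains to compute the genus of $|\mirror{\qpol}|$. By the definition of the mirror, $\mirror{\qpol}$ has the same arrow set and the same collection of faces as $\qpol$, so $\#\mirror{\qpol}_1=\#\qpol_1$ and $\#\mirror{\qpol}_2=\#\qpol_2$, while $\#\mirror{\qpol}_0$ counts the zigzag cycles of $\qpol$. A dimer model gives a cell decomposition of its surface, so
\[
\chi(|\mirror{\qpol}|)=\#\mirror{\qpol}_0-\#\mirror{\qpol}_1+\#\mirror{\qpol}_2=\#\mirror{\qpol}_0-\#\qpol_1+\#\qpol_2 .
\]
Since $\qpol$ lives on a torus, $0=\chi(|\qpol|)=\#\qpol_0-\#\qpol_1+\#\qpol_2$, so $\#\qpol_1-\#\qpol_2=\#\qpol_0$ and therefore $\chi(|\mirror{\qpol}|)=\#\mirror{\qpol}_0-\#\qpol_0$. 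Writing $\chi=2-2g$ gives $g=\tfrac12(2-\#\mirror{\qpol}_0+\#\qpol_0)$, with $\#\mirror{\qpol}_0$ punctures, as claimed.

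The hard part is the passage to completions in the second paragraph: one must check that the $\cA_\infty$-isomorphism of the small categories $\H\mf(\qpol)$ and $\fuk(\mirror{\qpol})$ indeed survives $\cD^\pi$, and — more substantially — that $\cD^\pi\Fuk(\mirror{\qpol})$ really is the full wrapped Fukaya category of the punctured surface rather than a proper split-generated subcategory, i.e.\ the generation statement that the arrow-arcs of $\mirror{\qpol}$ split-generate. (As a sanity check one should also verify that $\#\mirror{\qpol}_0-\#\qpol_0$ is even and $\le2$ so the formula gives a bona fide genus; by Lemma~\ref{pmscor} these two numbers are the count of boundary edges and of elementary triangles of $\Pg$, so this is exactly Pick's formula, and in fact $g$ equals the number of interior lattice points of $\Pg$ — which is why reflexive polygons are sent back to tori.)
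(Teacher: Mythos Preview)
Your proposal is correct and follows exactly the approach the paper itself uses: the theorem is not given a separate proof in the paper but is stated as the outcome of the preceding paragraph, which chains the $\cA_\infty$-isomorphism $\H\mf(\qpol)\cong\fuk(\mirror{\qpol})$ from \cite{Bocklandtmirror}, the passage to $\cD^\pi$, and the Ishii--Ueda equivalence \cite{IU}. Your write-up is in fact more explicit than the paper on two points: you spell out the Euler-characteristic computation of the genus of $|\mirror{\qpol}|$ (which the paper simply asserts in the theorem statement), and you flag the generation issue for $\cD^\pi\Fuk$---a genuine subtlety that the paper absorbs into the citation of \cite{Bocklandtmirror}.
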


\section{Weak Fano dimers}\label{mirrorFano}

\subsection{From surface to dimer}
Now we return to the setting of weak toric Fano varieties. Let $v_1,\dots,v_k \in \Z^2$ be the vectors in cyclic order 
that define
a weak toric Fano surface $X$ and let $(\cLL_i)_{i\in \Z}$ be a cyclic full strongly exceptional sequence on $X$.
We can represent each $\cLL_i$ by a divisor $l_{i1}E_1+\dots+l_{ik}E_k$. Here $E_j$ is the divisor corresponding to the vector $v_j$ and
$\ccE_i$ will be its corresponding line bundle.

How do we get a dimer model out of these data?
Let $\tilde Y$ be the total space of the canonical bundle $\cKK$ on $X$ and denote the natural projection by $\pi:\tilde Y\to X$.
From toric geometry we know that the fan of $Y$ can be constructed from the fan of $X$ in the following way.
We lift every vector $v_i$ to a vector $\tilde v_i=(v_{i1},v_{i2},1)$. These points form a polygon $\Pg$
in the plane with third coordinate equal to $1$. Let $z=(0,0,1)$ be the unique internal lattice point of this Polygon. 
The maximal dimensional cones of the fan of $\tilde Y$ are then  
$$
[\tilde v_1,\tilde v_{2},z], [\tilde v_2,\tilde v_{3},z],\dots [\tilde v_{k-1},\tilde v_{k},z], [\tilde v_k,\tilde v_{1},z].
$$
Because all vectors $\tilde v_i$ and $z$ lie in the same plane, $\tilde Y$ is a local Calabi-Yau 3-fold.
Moreover, this variety is a crepant resolution of affine variety $Y$ generated by the cone $[\tilde v_1,\dots,\tilde v_k]$.
To generate this cone we only need the $\tilde v_i$ that lie on the corners of the polygon. In terms of the $(a_i)$-sequence 
of the polygon these are $\tilde v_i$ for which $a_i\ne -2$. This gives us the following formula for the coordinate ring of $Y$.
\[
 \C[Y] := \C[x^{m_1}y^{m_2}z^{m_3}|  \forall \mu: a_\mu=-2:\<m,\tilde v_\mu\>\ge 0 ].
\]

A theorem of Bridgeland \cite{bridgeland} states that if $(\cLL_i)_{i\in \Z}$ is 
a cyclic full strongly exceptional sequence on $X$ then for any $i\in \Z$ the direct sum
\[
\cTT = \bigoplus_{j=i}^{i+k-1}\pi^*\cLL_j
\]
forms a tilting bundle on $\tilde Y$. Different choices of $i$ will give isomorphic tilting bundles.
The Picard group of $\tilde Y$ is generated by the toric divisors $\tilde E_i$ corresponding to the $\tilde v_i$ and an extra
divisor $Z$ coming from $z$. The pullback of a line bundle over $X$  with divisor 
$l_{i1}E_1+\dots+l_{ik}E_k$ results in a line bundle with divisor $l_{i1}\tilde E_1+\dots+l_{ik}\tilde E_k$.

Because $\tilde Y$ is a toric Calabi-Yau-3 variety and $\cTT$ is a direct sum of line bundles, $\cB$ is a toric Calabi-Yau-3 order
in the sense of the sense of \cite{Bocklandtqp}. This implies $\cB$ is the Jacobi algebra of a consistent dimer model 
$\qpol$ on a torus and a noncommutative crepant resolution of $Y$ in the sense of \cite{VandenBergh}. 
This is an endomorphism ring of reflexive $\C[Y]$-modules with global dimension equal to the dimension of $Y$.
For every $\pi^*\cLL_j$ we get a reflexive $\C[Y]$-module
\[
 L_j := \C[x^{m_1}y^{m_2}z^{m_3}|  \forall \mu: a_\mu=-2:\<m,\tilde v_\mu\>+l_{j\mu}\ge 0 ].
\]
The endomorphism ring of the direct sum of these reflexives 
$\End_{\C[Y]}\bigoplus_{j=i}^{i+k-1}L_j$ is isomorphic to $\cB$.

Analoguously to \cite{Bocklandtnccrs}, we can construct the dimer for $\cB$ in the following way. 
Let $u$ be the number of corner vertices of the polygon and
identify the lattice $\Z^u\subset \R^u$ with the set of reflexive $\C[Y]$-modules of the form
\[
 T_{a} := \C[x^{m_1}y^{m_2}z^{m_3}|  \forall \mu: a_\mu=-2:\<m,\tilde v_\mu\>+a_{j\mu}\ge 0 ] \text{ where }a\in \Z^u.
\]
Now let $\tilde Q_0$ be the subset of $\Z^u$ that corresponds to modules isomorphic to $L_i$ for
some $i$. Draw an arrow between $a$ and $b$ in $\tilde Q_0$ if $T_a\subset T_b$
(or equivalently $\forall i\le u: a_i\le b_i$) and there is no other $c\in \tilde Q_0$ with $T_a\subset T_c\subset T_b$.
Factor out the equivalence relation generated by $a\equiv b \iff  T_a\cong T_b$.
This projects our infinite quiver down to a finite quiver $\qpol$. The positive and negative cycles of 
$\qpol$ are all paths of $\Z^u$-degree $(1,\dots,1)$.
 
\begin{example}[The projective plane blown up in 3 points]
The toric fan of this Fano surface has six 2-dimensional cones as shown below. 
The full cyclic strongly exceptional sequence is obtained by extending the 6 line bundles below
to an infinite sequence by tensoring with powers of $\cKK$. An arrow corresponding to an embedding $T_a\subset T_b$ is
labeled by the vector $b-a$. 
\begin{center}
\begin{tabular}{cc}
\begin{tikzpicture} 
\draw[-latex] (0,0) -- (0,1) node[above] {$v_2$};
\draw[-latex] (0,0) -- (-1,1) node[above] {$v_3$};
\draw[-latex] (0,0) -- (-1,0) node[left] {$v_4$};
\draw[-latex] (0,0) -- (0,-1) node[below] {$v_5$};
\draw[-latex] (0,0) -- (1,-1) node[below] {$v_6$};
\draw[-latex] (0,0) -- (1,0) node[right] {$v_1$};
\draw (0,0) node{$\bullet$};
\draw (-2,-2) node[right]{$\cLL_1=\cO$};
\draw (-2,-2.5) node[right]{$\cLL_2=\ccE_6$};
\draw (-2,-3) node[right]{$\cLL_3=\ccE_4$};
\draw (-2,-3.5) node[right]{$\cLL_4=\ccE_4\otimes \ccE_5\otimes \ccE_6$};
\draw (-2,-4) node[right]{$\cLL_5=\ccE_3^{-1}\otimes \ccE_6$};
\draw (-2,-4.5) node[right]{$\cLL_6=\ccE_3^{-1}\otimes \ccE_5\otimes \ccE_6$};
\end{tikzpicture}&
\resizebox{7cm}{!}{
\begin{tikzpicture} 
\begin{scope}\clip (20/2pt,20/2pt) rectangle (420/2pt,420/2pt);
\draw[loosely dotted] (20/2pt,20/2pt) rectangle (420/2pt,420/2pt);
\draw [-latex,shorten >=5pt] (20/2pt,20/2pt) to node [rectangle,draw,fill=white,sloped,inner sep=1pt] {{\tiny 000100}}  (-113/2pt,-47/2pt); 
\draw [-latex,shorten >=5pt] (20/2pt,20/2pt) to node [rectangle,draw,fill=white,sloped,inner sep=1pt] {{\tiny 000001}}  (87/2pt,-47/2pt); 
\draw [-latex,shorten >=5pt] (287/2pt,153/2pt) to node [rectangle,draw,fill=white,sloped,inner sep=1pt] {{\tiny 000111}}  (153/2pt,-113/2pt); 
\draw [-latex,shorten >=5pt] (87/2pt,353/2pt) to node [rectangle,draw,fill=white,sloped,inner sep=1pt] {{\tiny 000110}}  (-113/2pt,153/2pt); 
\draw [-latex,shorten >=5pt] (87/2pt,153/2pt) to node [rectangle,draw,fill=white,sloped,inner sep=1pt] {{\tiny 001100}}  (-113/2pt,153/2pt); 
\draw [-latex,shorten >=5pt] (20/2pt,420/2pt) to node [rectangle,draw,fill=white,sloped,inner sep=1pt] {{\tiny 000001}}  (87/2pt,353/2pt); 
\draw [-latex,shorten >=5pt] (420/2pt,420/2pt) to node [rectangle,draw,fill=white,sloped,inner sep=1pt] {{\tiny 000100}}  (287/2pt,353/2pt); 
\draw [-latex,shorten >=5pt] (20/2pt,20/2pt) to node [rectangle,draw,fill=white,sloped,inner sep=1pt] {{\tiny 010000}}  (87/2pt,153/2pt); 
\draw [-latex,shorten >=5pt] (487/2pt,353/2pt) to node [rectangle,draw,fill=white,sloped,inner sep=1pt] {{\tiny 000110}}  (287/2pt,153/2pt); 
\draw [-latex,shorten >=5pt] (87/2pt,-47/2pt) to node [rectangle,draw,fill=white,sloped,inner sep=1pt] {{\tiny 110000}}  (287/2pt,153/2pt); 
\draw [-latex,shorten >=5pt] (287/2pt,353/2pt) to node [rectangle,draw,fill=white,sloped,inner sep=1pt] {{\tiny 000011}}  (287/2pt,153/2pt); 
\draw [-latex,shorten >=5pt] (287/2pt,-47/2pt) to node [rectangle,draw,fill=white,sloped,inner sep=1pt] {{\tiny 011000}}  (287/2pt,153/2pt); 
\draw [-latex,shorten >=5pt] (287/2pt,153/2pt) to node [rectangle,draw,fill=white,sloped,inner sep=1pt] {{\tiny 001110}}  (20/2pt,20/2pt); 
\draw [-latex,shorten >=5pt] (-113/2pt,153/2pt) to node [rectangle,draw,fill=white,sloped,inner sep=1pt] {{\tiny 100011}}  (20/2pt,20/2pt); 
\draw [-latex,shorten >=5pt] (-113/2pt,-247/2pt) to node [rectangle,draw,fill=white,sloped,inner sep=1pt] {{\tiny 111000}}  (20/2pt,20/2pt); 
\draw [-latex,shorten >=5pt] (287/2pt,553/2pt) to node [rectangle,draw,fill=white,sloped,inner sep=1pt] {{\tiny 000111}}  (153/2pt,287/2pt); 
\draw [-latex,shorten >=5pt] (287/2pt,153/2pt) to node [rectangle,draw,fill=white,sloped,inner sep=1pt] {{\tiny 011100}}  (153/2pt,287/2pt); 
\draw [-latex,shorten >=5pt] (-113/2pt,153/2pt) to node [rectangle,draw,fill=white,sloped,inner sep=1pt] {{\tiny 110001}}  (153/2pt,287/2pt); 
\draw [-latex,shorten >=5pt] (153/2pt,287/2pt) to node [rectangle,draw,fill=white,sloped,inner sep=1pt] {{\tiny 001000}}  (87/2pt,353/2pt); 
\draw [-latex,shorten >=5pt] (153/2pt,287/2pt) to node [rectangle,draw,fill=white,sloped,inner sep=1pt] {{\tiny 100000}}  (287/2pt,353/2pt); 
\draw [-latex,shorten >=5pt] (153/2pt,287/2pt) to node [rectangle,draw,fill=white,sloped,inner sep=1pt] {{\tiny 000010}}  (87/2pt,153/2pt); 
\draw [-latex,shorten >=5pt] (487/2pt,153/2pt) to node [rectangle,draw,fill=white,sloped,inner sep=1pt] {{\tiny 001100}}  (287/2pt,153/2pt); 
\draw [-latex,shorten >=5pt] (87/2pt,153/2pt) to node [rectangle,draw,fill=white,sloped,inner sep=1pt] {{\tiny 100001}}  (287/2pt,153/2pt); 
\draw [-latex,shorten >=5pt] (287/2pt,153/2pt) to node [rectangle,draw,fill=white,sloped,inner sep=1pt] {{\tiny 100011}}  (420/2pt,20/2pt); 
\draw [-latex,shorten >=5pt] (287/2pt,153/2pt) to node [rectangle,draw,fill=white,sloped,inner sep=1pt] {{\tiny 110001}}  (553/2pt,287/2pt); 
\draw [-latex,shorten >=5pt] (87/2pt,353/2pt) to node [rectangle,draw,fill=white,sloped,inner sep=1pt] {{\tiny 110000}}  (287/2pt,553/2pt); 
\draw [-latex,shorten >=5pt] (287/2pt,353/2pt) to node [rectangle,draw,fill=white,sloped,inner sep=1pt] {{\tiny 011000}}  (287/2pt,553/2pt); 
\draw [-latex,shorten >=5pt] (287/2pt,153/2pt) to node [rectangle,draw,fill=white,sloped,inner sep=1pt] {{\tiny 111000}}  (420/2pt,420/2pt); 
\draw [-latex,shorten >=5pt] (20/2pt,20/2pt) to node [rectangle,draw,fill=white,sloped,inner sep=1pt] {{\tiny 000100}}  (-113/2pt,-47/2pt); 
\draw [-latex,shorten >=5pt] (20/2pt,20/2pt) to node [rectangle,draw,fill=white,sloped,inner sep=1pt] {{\tiny 000001}}  (87/2pt,-47/2pt); 
\draw [-latex,shorten >=5pt] (287/2pt,153/2pt) to node [rectangle,draw,fill=white,sloped,inner sep=1pt] {{\tiny 000111}}  (153/2pt,-113/2pt); 
\draw [-latex,shorten >=5pt] (87/2pt,353/2pt) to node [rectangle,draw,fill=white,sloped,inner sep=1pt] {{\tiny 000110}}  (-113/2pt,153/2pt); 
\draw [-latex,shorten >=5pt] (87/2pt,153/2pt) to node [rectangle,draw,fill=white,sloped,inner sep=1pt] {{\tiny 001100}}  (-113/2pt,153/2pt); 
\draw [-latex,shorten >=5pt] (20/2pt,420/2pt) to node [rectangle,draw,fill=white,sloped,inner sep=1pt] {{\tiny 000001}}  (87/2pt,353/2pt); 
\draw [-latex,shorten >=5pt] (420/2pt,420/2pt) to node [rectangle,draw,fill=white,sloped,inner sep=1pt] {{\tiny 000100}}  (287/2pt,353/2pt); 
\draw [-latex,shorten >=5pt] (20/2pt,20/2pt) to node [rectangle,draw,fill=white,sloped,inner sep=1pt] {{\tiny 010000}}  (87/2pt,153/2pt); 
\draw [-latex,shorten >=5pt] (487/2pt,353/2pt) to node [rectangle,draw,fill=white,sloped,inner sep=1pt] {{\tiny 000110}}  (287/2pt,153/2pt); 
\draw [-latex,shorten >=5pt] (87/2pt,-47/2pt) to node [rectangle,draw,fill=white,sloped,inner sep=1pt] {{\tiny 110000}}  (287/2pt,153/2pt); 
\draw [-latex,shorten >=5pt] (287/2pt,353/2pt) to node [rectangle,draw,fill=white,sloped,inner sep=1pt] {{\tiny 000011}}  (287/2pt,153/2pt); 
\draw [-latex,shorten >=5pt] (287/2pt,-47/2pt) to node [rectangle,draw,fill=white,sloped,inner sep=1pt] {{\tiny 011000}}  (287/2pt,153/2pt); 
\draw [-latex,shorten >=5pt] (287/2pt,153/2pt) to node [rectangle,draw,fill=white,sloped,inner sep=1pt] {{\tiny 001110}}  (20/2pt,20/2pt); 
\draw [-latex,shorten >=5pt] (-113/2pt,153/2pt) to node [rectangle,draw,fill=white,sloped,inner sep=1pt] {{\tiny 100011}}  (20/2pt,20/2pt); 
\draw [-latex,shorten >=5pt] (-113/2pt,-247/2pt) to node [rectangle,draw,fill=white,sloped,inner sep=1pt] {{\tiny 111000}}  (20/2pt,20/2pt); 
\draw [-latex,shorten >=5pt] (287/2pt,553/2pt) to node [rectangle,draw,fill=white,sloped,inner sep=1pt] {{\tiny 000111}}  (153/2pt,287/2pt); 
\draw [-latex,shorten >=5pt] (287/2pt,153/2pt) to node [rectangle,draw,fill=white,sloped,inner sep=1pt] {{\tiny 011100}}  (153/2pt,287/2pt); 
\draw [-latex,shorten >=5pt] (-113/2pt,153/2pt) to node [rectangle,draw,fill=white,sloped,inner sep=1pt] {{\tiny 110001}}  (153/2pt,287/2pt); 
\draw [-latex,shorten >=5pt] (153/2pt,287/2pt) to node [rectangle,draw,fill=white,sloped,inner sep=1pt] {{\tiny 001000}}  (87/2pt,353/2pt); 
\draw [-latex,shorten >=5pt] (153/2pt,287/2pt) to node [rectangle,draw,fill=white,sloped,inner sep=1pt] {{\tiny 100000}}  (287/2pt,353/2pt); 
\draw [-latex,shorten >=5pt] (153/2pt,287/2pt) to node [rectangle,draw,fill=white,sloped,inner sep=1pt] {{\tiny 000010}}  (87/2pt,153/2pt); 
\draw [-latex,shorten >=5pt] (487/2pt,153/2pt) to node [rectangle,draw,fill=white,sloped,inner sep=1pt] {{\tiny 001100}}  (287/2pt,153/2pt); 
\draw [-latex,shorten >=5pt] (87/2pt,153/2pt) to node [rectangle,draw,fill=white,sloped,inner sep=1pt] {{\tiny 100001}}  (287/2pt,153/2pt); 
\draw [-latex,shorten >=5pt] (287/2pt,153/2pt) to node [rectangle,draw,fill=white,sloped,inner sep=1pt] {{\tiny 100011}}  (420/2pt,20/2pt); 
\draw [-latex,shorten >=5pt] (287/2pt,153/2pt) to node [rectangle,draw,fill=white,sloped,inner sep=1pt] {{\tiny 110001}}  (553/2pt,287/2pt); 
\draw [-latex,shorten >=5pt] (87/2pt,353/2pt) to node [rectangle,draw,fill=white,sloped,inner sep=1pt] {{\tiny 110000}}  (287/2pt,553/2pt); 
\draw [-latex,shorten >=5pt] (287/2pt,353/2pt) to node [rectangle,draw,fill=white,sloped,inner sep=1pt] {{\tiny 011000}}  (287/2pt,553/2pt); 
\draw [-latex,shorten >=5pt] (287/2pt,153/2pt) to node [rectangle,draw,fill=white,sloped,inner sep=1pt] {{\tiny 111000}}  (420/2pt,420/2pt); 
\draw [-latex,shorten >=5pt] (20/2pt,20/2pt) to node [rectangle,draw,fill=white,sloped,inner sep=1pt] {{\tiny 000100}}  (-113/2pt,-47/2pt); 
\draw [-latex,shorten >=5pt] (20/2pt,20/2pt) to node [rectangle,draw,fill=white,sloped,inner sep=1pt] {{\tiny 000001}}  (87/2pt,-47/2pt); 
\draw [-latex,shorten >=5pt] (287/2pt,153/2pt) to node [rectangle,draw,fill=white,sloped,inner sep=1pt] {{\tiny 000111}}  (153/2pt,-113/2pt); 
\draw [-latex,shorten >=5pt] (87/2pt,353/2pt) to node [rectangle,draw,fill=white,sloped,inner sep=1pt] {{\tiny 000110}}  (-113/2pt,153/2pt); 
\draw [-latex,shorten >=5pt] (87/2pt,153/2pt) to node [rectangle,draw,fill=white,sloped,inner sep=1pt] {{\tiny 001100}}  (-113/2pt,153/2pt); 
\draw [-latex,shorten >=5pt] (20/2pt,420/2pt) to node [rectangle,draw,fill=white,sloped,inner sep=1pt] {{\tiny 000001}}  (87/2pt,353/2pt); 
\draw [-latex,shorten >=5pt] (420/2pt,420/2pt) to node [rectangle,draw,fill=white,sloped,inner sep=1pt] {{\tiny 000100}}  (287/2pt,353/2pt); 
\draw [-latex,shorten >=5pt] (20/2pt,20/2pt) to node [rectangle,draw,fill=white,sloped,inner sep=1pt] {{\tiny 010000}}  (87/2pt,153/2pt); 
\draw [-latex,shorten >=5pt] (487/2pt,353/2pt) to node [rectangle,draw,fill=white,sloped,inner sep=1pt] {{\tiny 000110}}  (287/2pt,153/2pt); 
\draw [-latex,shorten >=5pt] (87/2pt,-47/2pt) to node [rectangle,draw,fill=white,sloped,inner sep=1pt] {{\tiny 110000}}  (287/2pt,153/2pt); 
\draw [-latex,shorten >=5pt] (287/2pt,353/2pt) to node [rectangle,draw,fill=white,sloped,inner sep=1pt] {{\tiny 000011}}  (287/2pt,153/2pt); 
\draw [-latex,shorten >=5pt] (287/2pt,-47/2pt) to node [rectangle,draw,fill=white,sloped,inner sep=1pt] {{\tiny 011000}}  (287/2pt,153/2pt); 
\draw [-latex,shorten >=5pt] (287/2pt,153/2pt) to node [rectangle,draw,fill=white,sloped,inner sep=1pt] {{\tiny 001110}}  (20/2pt,20/2pt); 
\draw [-latex,shorten >=5pt] (-113/2pt,153/2pt) to node [rectangle,draw,fill=white,sloped,inner sep=1pt] {{\tiny 100011}}  (20/2pt,20/2pt); 
\draw [-latex,shorten >=5pt] (-113/2pt,-247/2pt) to node [rectangle,draw,fill=white,sloped,inner sep=1pt] {{\tiny 111000}}  (20/2pt,20/2pt); 
\draw [-latex,shorten >=5pt] (287/2pt,553/2pt) to node [rectangle,draw,fill=white,sloped,inner sep=1pt] {{\tiny 000111}}  (153/2pt,287/2pt); 
\draw [-latex,shorten >=5pt] (287/2pt,153/2pt) to node [rectangle,draw,fill=white,sloped,inner sep=1pt] {{\tiny 011100}}  (153/2pt,287/2pt); 
\draw [-latex,shorten >=5pt] (-113/2pt,153/2pt) to node [rectangle,draw,fill=white,sloped,inner sep=1pt] {{\tiny 110001}}  (153/2pt,287/2pt); 
\draw [-latex,shorten >=5pt] (153/2pt,287/2pt) to node [rectangle,draw,fill=white,sloped,inner sep=1pt] {{\tiny 001000}}  (87/2pt,353/2pt); 
\draw [-latex,shorten >=5pt] (153/2pt,287/2pt) to node [rectangle,draw,fill=white,sloped,inner sep=1pt] {{\tiny 100000}}  (287/2pt,353/2pt); 
\draw [-latex,shorten >=5pt] (153/2pt,287/2pt) to node [rectangle,draw,fill=white,sloped,inner sep=1pt] {{\tiny 000010}}  (87/2pt,153/2pt); 
\draw [-latex,shorten >=5pt] (487/2pt,153/2pt) to node [rectangle,draw,fill=white,sloped,inner sep=1pt] {{\tiny 001100}}  (287/2pt,153/2pt); 
\draw [-latex,shorten >=5pt] (87/2pt,153/2pt) to node [rectangle,draw,fill=white,sloped,inner sep=1pt] {{\tiny 100001}}  (287/2pt,153/2pt); 
\draw [-latex,shorten >=5pt] (287/2pt,153/2pt) to node [rectangle,draw,fill=white,sloped,inner sep=1pt] {{\tiny 100011}}  (420/2pt,20/2pt); 
\draw [-latex,shorten >=5pt] (287/2pt,153/2pt) to node [rectangle,draw,fill=white,sloped,inner sep=1pt] {{\tiny 110001}}  (553/2pt,287/2pt); 
\draw [-latex,shorten >=5pt] (87/2pt,353/2pt) to node [rectangle,draw,fill=white,sloped,inner sep=1pt] {{\tiny 110000}}  (287/2pt,553/2pt); 
\draw [-latex,shorten >=5pt] (287/2pt,353/2pt) to node [rectangle,draw,fill=white,sloped,inner sep=1pt] {{\tiny 011000}}  (287/2pt,553/2pt); 
\draw [-latex,shorten >=5pt] (287/2pt,153/2pt) to node [rectangle,draw,fill=white,sloped,inner sep=1pt] {{\tiny 111000}}  (420/2pt,420/2pt); 
\node at (20/2pt,20/2pt) [circle,draw,fill=white,minimum size=10/2pt,inner sep=1pt] {\mbox{\tiny $1$}}; 
\node at (87/2pt,353/2pt) [circle,draw,fill=white,minimum size=10/2pt,inner sep=1pt] {\mbox{\tiny $2$}}; 
\node at (287/2pt,353/2pt) [circle,draw,fill=white,minimum size=10/2pt,inner sep=1pt] {\mbox{\tiny $3$}}; 
\node at (287/2pt,153/2pt) [circle,draw,fill=white,minimum size=10/2pt,inner sep=1pt] {\mbox{\tiny $4$}}; 
\node at (153/2pt,287/2pt) [circle,draw,fill=white,minimum size=10/2pt,inner sep=1pt] {\mbox{\tiny $5$}}; 
\node at (87/2pt,153/2pt) [circle,draw,fill=white,minimum size=10/2pt,inner sep=1pt] {\mbox{\tiny $6$}}; 
\end{scope}
\node at (20/2pt,20/2pt) [circle,draw,fill=white,minimum size=10/2pt,inner sep=1pt] {\mbox{\tiny $1$}}; 
\node at (420/2pt,20/2pt) [circle,draw,fill=white,minimum size=10/2pt,inner sep=1pt] {\mbox{\tiny $1$}}; 
\node at (20/2pt,420/2pt) [circle,draw,fill=white,minimum size=10/2pt,inner sep=1pt] {\mbox{\tiny $1$}}; 
\node at (420/2pt,420/2pt) [circle,draw,fill=white,minimum size=10/2pt,inner sep=1pt] {\mbox{\tiny $1$}}; 
\end{tikzpicture}}
\end{tabular}
\end{center}
\end{example}

\subsection{From dimer to surface}

A dimer on a torus is called \emph{weak Fano}, if it is consistent and its polygon has one internal lattice point. 
This means that the number of elementary triangles in the polygon $\Pg$ equals the number of elementary segments 
on the boundary, so by lemma \ref{pmscor} the last condition is equivalent to the property that the number of zigzag paths 
equals the number of vertices. By chosing the weak paths $x,y$ (used to constuct the lattice points 
for the perfect matchings) carefully we can assume that the internal vertex of the polygon is $(0,0,1)$.
We order the boundary lattice points of the polygon cyclicly and assume they have coordinates $\tilde v_i := (v_i,1)$ where $v_i\in\Z^2$.

To any weak Fano dimer $\qpol$ we will associate the toric Fano surface $X_{\qpol}$ defined by the $v_i$.
This gives us a sequence $(a_i)$ which can be reconstructed from the zigzag cycles. Fix a trivial vertex $\triv$
and use this to assign to each $(v_i,1)$ a stable perfect matching $\PM_i$ and 
to each elementary line segment between $(v_i,1)$ and $(v_{i+1},1)$ a zigzag path $\cZ_i$ (see theorem \ref{pmsmain} and lemma \ref{pmscor}).

\begin{proposition}
The sequence $(a_i)$ of the weak toric Fano is the same as the sequence $(k_i-2)$ where
$k_i$ is the number of common arrows between the $i^{th}$ and $(i+1)^{th}$ zigzag cycle.
\end{proposition}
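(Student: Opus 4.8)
The plan is to compare two descriptions of the same integer. On the toric side, $a_i$ is determined by the relation $v_{i-1}+a_iv_i+v_{i+1}=0$ among consecutive boundary lattice vectors, which we should rewrite in terms of the lifted vectors $\tilde v_i=(v_i,1)$: the identity $\tilde v_{i-1}+a_i\tilde v_i+\tilde v_{i+1}=(0,0,a_i+2)$ shows that $a_i+2$ is exactly the lattice length of the segment from $\tilde v_{i-1}$ to $\tilde v_{i+1}$ measured in the $z$-direction, equivalently the number of elementary triangles in $\Pg$ incident to the boundary vertex $\tilde v_i$, equivalently the number of elementary boundary segments meeting at $\tilde v_i$. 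So on the geometric side, $a_i+2$ counts the elementary segments of $\partial\Pg$ adjacent to the lattice point corresponding to the perfect matching $\PM_i$.

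On the dimer side, I would use Lemma \ref{pmscor}(2)--(3). By part (2), the zigzag cycle $\cZ_j$ is the symmetric difference of the two stable boundary matchings $\PM_j$ and $\PM_{j+1}$ attached to the endpoints of the $j^{\text{th}}$ elementary segment, and every zigzag cycle arises this way. Thus the arrows of $\cZ_{i-1}$ are those lying in exactly one of $\PM_{i-1},\PM_i$, and the arrows of $\cZ_i$ are those in exactly one of $\PM_i,\PM_{i+1}$. By part (3), the boundary matchings containing a fixed arrow $a$ form a contiguous arc of boundary lattice points, namely the arc strictly between the elementary segments carrying the zig path and the zag path of $a$. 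The key combinatorial step is then to count, for the vertex $\tilde v_i$, how many arrows have $\tilde v_i$ as an interior point of that arc versus as an endpoint: an arrow $a$ contributes to the common arrows $\cZ_{i-1}\cap\cZ_i$ precisely when the arc of boundary matchings containing $a$ begins or ends exactly at $\PM_i$, i.e. when one of the two elementary segments delimiting $a$'s arc is $\cZ_{i-1}$ and the other is on the far side, or symmetrically — more cleanly, $a\in\cZ_{i-1}\cap\cZ_i$ iff $a\in\PM_i$ but $a\notin\PM_{i-1}$ and $a\notin\PM_{i+1}$, which by part (3) says exactly that $\PM_i$ is one of the two extreme matchings of the arc $\{\PM:\PM\ni a\}$.

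Hence $k_i=\#(\cZ_{i-1}\cap\cZ_i)$ equals the number of arrows whose boundary-matching arc has $\PM_i$ as a boundary (endpoint) of the arc. Now every arrow $a$ has an arc of boundary matchings with exactly two endpoints, and each endpoint of each such arc is the common vertex of two consecutive elementary boundary segments; conversely at the vertex $\tilde v_i$, the segments $\cZ_{i-1}$ and $\cZ_i$ are consecutive, and the arrows whose arc ends at $\PM_i$ are in bijection with... here is where I would do the bookkeeping carefully: the number of elementary segments meeting at $\tilde v_i$ is $a_i+2$, and I claim this equals the number of arrows $a$ with $\PM_i$ an extreme matching of the arc of $a$, which is $k_i$. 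The cleanest route is to observe that the zigzag cycles through the segments at $\tilde v_i$, together with the structure of zig/zag rays, give a local matching between "arrows turning a corner at $\tilde v_i$" and "pairs of consecutive segments at $\tilde v_i$"; summing the local count gives $k_i=a_i+2$, i.e. $a_i=k_i-2$.

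The main obstacle I expect is the last paragraph: translating Lemma \ref{pmscor}(3) into a precise statement that the multiset of arcs, indexed by arrows, has exactly $a_i+2$ of its endpoints landing at each boundary lattice point $\tilde v_i$. This requires knowing that each zig ray and each zag ray contributes the right local combinatorics at $\tilde v_i$ — essentially that the elementary segment $\cZ_i$ "starts" the arcs of the arrows in $\cZ_i$ on one side and "ends" them on the other — and I would prove it by a careful local analysis of the zig and zag rays of the arrows in $\cZ_{i-1}$ and $\cZ_i$, using that consecutive zigzag cycles share exactly the arrows that "pivot" at the common perfect matching $\PM_i$. Everything else (the toric identity for $a_i$, and invoking parts (1)--(3) and (5) of Lemma \ref{pmscor}) is routine.
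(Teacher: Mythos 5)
Your proposal heads in a different direction from the paper and stalls on the key step, with two concrete errors along the way.

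First, the opening geometric claim is not right. From $\tilde v_{i-1}+a_i\tilde v_i+\tilde v_{i+1}=(0,0,a_i+2)$ it does not follow that $a_i+2$ counts ``elementary boundary segments meeting at $\tilde v_i$'' --- that count is always $2$, regardless of $a_i$. What $a_i+2$ actually equals is the determinant $\det\left(\begin{smallmatrix} v_i-v_{i-1}\\ v_{i+1}-v_i\end{smallmatrix}\right)$, i.e. the homological intersection number of the $i^{th}$ and $(i+1)^{th}$ zigzag classes on the torus (using that $\cZ_j$ is perpendicular to the boundary edge it corresponds to, and the relation $v_{i-1}=-a_iv_i-v_{i+1}$). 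That determinant computation is the correct toric-side identity, and it is the one the paper uses.

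Second, the characterization of $\cZ_{i-1}\cap\cZ_i$ is incomplete. From Lemma \ref{pmscor}(2), $a\in\cZ_{i-1}\cap\cZ_i$ means $a$ lies in exactly one of $\{\PM_{i-1},\PM_i\}$ and in exactly one of $\{\PM_i,\PM_{i+1}\}$. This gives \emph{two} cases, not one: either $a\in\PM_i$ and $a\notin\PM_{i-1},\PM_{i+1}$ (your case, where the arc is the singleton $\{\PM_i\}$), or $a\notin\PM_i$ and $a\in\PM_{i-1},\PM_{i+1}$ (the arc is the complement $\{\PM_j:j\neq i\}$). Your phrase ``$\PM_i$ is one of the two extreme matchings of the arc'' is also strictly weaker than what you need: an arrow with arc $\{\PM_i,\PM_{i+1},\PM_{i+2}\}$ has $\PM_i$ as an extreme point but lies in $\cZ_{i-1}\cap\cZ_{i+2}$, not $\cZ_{i-1}\cap\cZ_i$. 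So the bookkeeping you flag as ``the main obstacle'' is indeed where the argument fails as written.

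The paper closes this gap differently and more cheaply: it observes that the topological intersection number of consecutive zigzag cycles is $a_i+2$ by the determinant computation, and then shows this intersection number equals the number of joint arrows. For the latter, if the two cycles crossed in both directions one would find (via Lemma \ref{pmscor}(3)) an arrow contained in only one boundary matching and an arrow contained in all but one; combined with the fact that every positive or negative cycle meets every matching exactly once, this would force a face of length $\leq 2$, which is forbidden. This short sign argument is exactly what replaces the arc bookkeeping you did not finish. If you want to complete your route instead, you would need to prove that the number of arrows whose arc is $\{\PM_i\}$ plus the number whose arc is $\{\PM_j:j\neq i\}$ equals $a_i+2$, and it is hard to see how to get the quantity $a_i$ into the picture without passing through the determinant/intersection-number identity anyway.
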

\begin{proof}
The $i^{th}$ zigzag path that corresponds to the boundary segment $\tilde v_{i-1} \tilde v_{i}$ points in the direction
perpendicular to $v_{i}-v_{i-1}$. Therefore the intersection number on the torus between the $i^{th}$ and $(i+1)^{th}$
zigzag cycle can be calculated as
\se{
 \det \smatrix{v_i - v_{i-1}\\v_{i+1} - v_{i}}
&= 
\det \smatrix{v_i \\v_{i+1} }
+\det \smatrix{- v_{i-1}\\v_{i+1}}
+\det \smatrix{v_i \\- v_{i}}
\det \smatrix{- v_{i-1}\\- v_{i}}\\
&1+\det \smatrix{a_iv_i +v_{i+1}\\v_{i+1}}
+0+1\\
=2 + a_i
}
The intersection number between two consecutive zigzag paths is always equal to the number of joint arrows.
If this were not the case the zigzag paths would have to cross each other in different directions (both from left to right
and from right to left).
This would imply that there are is an arrow $a_1$ for which the first zigzag cycle is a zig ray and the second a zag ray, and
an arrow $a_2$ for which the first zigzag cycle is a zag ray and the second a zig ray.
By lemma \ref{pmscor} (3) one of these arrows  would be contained
in all stable boundary perfect matchings except one (while the other is only contained in a single one). 
Now each arrow is contained in at least one boundary matching 
and each perfect matching contains just one arrows of a positive or negative cycle. 
Therefore $a_1$ or $a_2$ would sit in a positive cycle of at most 2 arrows, which is forbidden by our definition of dimer model.
\end{proof}

Now we construct a full strongly exceptional sequence on this weak Fano surface. 
Fix a trivial vertex $\triv$ and
let $E_i$ be the divisor corresponding to the vector $v_i$ and $\triv$-stable perfect matching $\PM_i$.
Choose a degree function $\cR =\lambda_1\PM_1+\dots +\lambda_n \PM_n$ with $\lambda_i>0$ such that
$(\cR(x),\cR(y),\cR(\ell))=(0,0,2)$ (this is possible because $(0,0,2)$ is in the cone spanned by the polygon).

All cyclic paths in the dimer have a $\cR$-degree that is an even integer so vertex $v$ can be given a number $\cR(p)$ with $h(p)=\triv$ 
and $t(p)=v$. This number is uniquely defined
in $\Rl/2\Z$ and we can use it to give a cyclic order to the vertices of $\qpol$. Let $w_1,\dots w_k$ be these vertices
in cyclic order starting with $w_1=\triv$.

Now put $\cLL_1=\cO_X$ and define $\cLL_{i\pm 1}$ inductively from $\cLL_i$ as follows.
If $p$ is a weak path from $w_i$ to $w_{i\pm 1}$ with $0\le\cR(p)<2$ then we set
\[
 \cLL_{i\pm 1} =\cLL_i\otimes \ccE_1^{\pm\PM_1(p)} \otimes \cdots \otimes \ccE_k^{\pm\PM_k(p)}.
\]

\begin{proposition}
$(\cLL_i)$ is a well-defined cyclic strong exceptional sequence on $X_\qpol$ and its
sequence $(b_i)$ is given by $(\#\{a\in \qpol_1| h(a)=w_{i+1}, t(a)=w_{i}-2\}$ 
\end{proposition}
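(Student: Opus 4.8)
The plan is to verify the three claims in sequence: first that the bundles $\cLL_i$ are well-defined, then that they form a cyclic strong exceptional sequence, and finally that the $(b_i)$-sequence has the stated combinatorial description. For well-definedness, the issue is that the inductive step chooses a weak path $p$ from $w_i$ to $w_{i\pm1}$ with $0\le\cR(p)<2$, and there may be several such paths; I would show that any two such paths $p,p'$ have the same perfect-matching degrees $\PM_j(p)=\PM_j(p')$ for every stable matching $\PM_j$. Since $p(p')^{-1}$ is a weak cycle from $w_i$ to itself with $\cR$-degree in $(-2,2)$, and all real cyclic paths have $\cR$-degree a positive even integer, one sees that $p(p')^{-1}$ is (weakly) homotopic to a multiple of the trivial path once one accounts for the central element $\ell$; more precisely the $\cR$-degree forces $p(p')^{-1}=\ell^0$ as a weak cycle, so all $\PM_j$-degrees agree and the tensor product is independent of the choice. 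One must also check the periodicity $\cLL_{i+k}=\cLL_i\otimes\cKK^{-1}$: going once around the cyclic order of vertices corresponds to a weak path with $\cR$-degree exactly $2$, i.e. to $\ell$ up to the chosen generators, and $\PM_j(\ell)=1$ for all $j$, so the accumulated divisor is $\sum_j E_j=-\cKK$ — the anticanonical divisor on a toric surface.

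Next, the strong exceptionality. By Bridgeland's theorem quoted in the excerpt, once we know $\cB=\cJA(\qpol)$ is the endomorphism ring of the tilting bundle $\cTT=\bigoplus\pi^*\cLL_j$ on $\tilde Y$, the vanishing $\Ext^{>0}(\pi^*\cLL_i,\pi^*\cLL_j)=0$ for all $i,j$ in a window is automatic, and by Bridgeland this descends to the required $\Ext$-vanishing on $X$ together with $\Hom(\cLL_i,\cLL_j)=0$ for $i>j$. So the real content is matching the bundles $\cLL_i$ constructed here with the summands of $\cTT$ under the identification of vertices $w_i$ with the modules $L_i$ from Theorem \ref{pmsmain}(3): the divisor attached to $w_i$ is $\sum_j\PM_j(p_{w_i})\PM_j$ where $p_{w_i}$ is a path from $\triv$ to $w_i$, and by the well-definedness argument this matches the telescoping product of the one-step definitions once we check the $\cR$-degree bookkeeping places the vertices in exactly the order $w_1,\dots,w_k$ that Bridgeland's window requires. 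This is where I expect the main obstacle: one must show the cyclic order induced by $\cR$ on the vertices is compatible with the order in which the line bundles appear in a strongly exceptional window, i.e. that $\Hom(\cLL_i,\cLL_j)\ne0$ forces $i\le j$ within a window — this should follow because a nonzero hom corresponds to a real (non-weak) path, which has strictly positive $\cR$-degree, so $\cR(w_j)-\cR(w_i)>0$ in $\Rl/2\Z$ lifts correctly, but making the "lifts correctly" precise across the period is the delicate point.

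Finally, for the $(b_i)$-sequence: by definition $b_i=\dim\Hom(\cLL_i,\cLL_{i+1})-2$. A basis of $\Hom_{\cB}$-morphisms between the summands of $\cTT$ corresponding to $w_i$ and $w_{i+1}$ is given by (residues of) real paths in $\qpol$ from $w_i$ to $w_{i+1}$ of minimal $\cR$-degree, and the dimension of the degree-zero part of this hom-space in the quiver picture is exactly the number of arrows $a$ with $t(a)=w_i$, $h(a)=w_{i+1}$ — here using that consecutive vertices in the $\cR$-order differ by the $\cR$-degree of a single arrow, so arrows are the minimal-degree morphisms. Then $b_i=\#\{a\in\qpol_1\mid h(a)=w_{i+1},\,t(a)=w_i\}-2$, which is the claimed formula. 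The $-2$ and the comparison with the $(a_i)$-sequence of the previous proposition (where $a_i=k_i-2$ with $k_i$ the number of common arrows of consecutive zigzag cycles) is precisely the shadow of dimer duality, and I would remark that this is consistent with the main theorem of the paper; but the proposition itself only needs the arrow-count identification, which is a direct consequence of Theorem \ref{pmsmain}(3) and the $\cR$-degree ordering established in the previous paragraph.
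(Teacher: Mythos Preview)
Your argument for well-definedness contains a genuine error. You claim that if $p,p'$ are two weak paths from $w_i$ to $w_{i+1}$ with $0\le\cR(p),\cR(p')<2$, then $\PM_j(p)=\PM_j(p')$ for every stable matching $\PM_j$, because ``the $\cR$-degree forces $p(p')^{-1}=\ell^0$ as a weak cycle.'' This is false: a weak cycle of $\cR$-degree $0$ need not be trivial in $\widehat{\cJA}(\qpol)$. The $\cR$-charge was chosen so that $\cR(x)=\cR(y)=0$, so any cycle of the form $x^my^n$ has $\cR$-degree $0$ but has $\PM_j$-degree $\langle v_j,(m,n)\rangle$, which is generically nonzero. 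The paper's argument is different and correct: the discrepancy in divisors is precisely $\sum_j \langle v_j,(m,n)\rangle E_j$, which is the principal divisor of the character $\chi^{(m,n)}$ on the toric surface $X_\qpol$, hence the two line bundles are \emph{isomorphic} even though the defining data differ.

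Your approach to strong exceptionality is also a genuine departure from the paper, and as written it has a gap. You invoke Bridgeland's theorem to pass from a tilting bundle on $\tilde Y$ back down to an exceptional sequence on $X$, but the theorem as quoted only runs in the opposite direction (exceptional on $X$ $\Rightarrow$ tilting on $\tilde Y$). A descent argument along $\pi:\tilde Y\to X$ is in principle available---one has $R\pi_*\cO_{\tilde Y}\cong\bigoplus_{n\ge 0}\cKK^{\otimes n}$, so $\Ext^*_{\tilde Y}(\pi^*\cLL_i,\pi^*\cLL_j)\cong\bigoplus_{n\ge 0}\Ext^*_X(\cLL_i,\cLL_j\otimes\cKK^{\otimes n})$, and careful bookkeeping with the cyclic ordering could extract what you need---but you have not supplied it, and the remark immediately following the paper's proof explains why the author deliberately avoided this route. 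The paper instead computes $H^i(\cLL(p))$ directly via a \v{C}ech complex on $X_\qpol$: the key input is Lemma~\ref{pmscor}(5), which says that for any weak path the boundary matchings with nonnegative degree form a connected arc of $\partial\Pg$, and this forces $H^1=0$; the vanishing of $H^2$ and of $H^0(\cLL(p^{-1}))$ then follow from $\cR(p)\ge 0$ and Serre duality respectively.

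Your treatment of the $(b_i)$-sequence is essentially the same as the paper's: morphisms $\cLL_i\to\cLL_{i+1}$ correspond to real paths, all of which have strictly positive $\cR$-degree, and since there is no vertex strictly between $w_i$ and $w_{i+1}$ in the cyclic $\cR$-order, none of these morphisms factor, so the arrows give a basis.
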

\begin{proof}
We first have to show that the sequence is well defined.
If we had chosen a different path weak $p'$ then $\cR(p')=\cR(p)$ because
$\cR(c)$ of any cycle is an even integer. Therefore
\se{
\cLL_{i+1}^{-1}\cLL_{i+1}' &= 
\ccE_1^{\PM_1(p')-\PM_1(p)} \otimes \cdots \otimes \ccE_k^{^{\PM_k(p')-\PM_k(p)}} \\
&=\ccE_1^{\<v_1,(i,j)\>} \otimes \cdots \otimes \ccE_k^{\<v_k,(i,j)\>}\cong\cO \text{ in $\Pic X_{\qpol}$}
}

To show that the sequence is strongly exceptional it suffices to show that 
for any weak path $p$ with $0\le \cR(p)<2$ the bundle
\[
 \cLL(p) := \ccE_1^{\PM_1(p)} \otimes \cdots \otimes \ccE_k^{\PM_k(p)}
\]
has $H^i(\cLL(p))=0$ for all $i\ne 0$ and
$H^i(\cLL(p^{-1})=0$ for all $i$.

The cohomology of $\sum_i d_iE_i$ can be computed using the complex $\cF^2\stackrel{\delta}{\to} \cF^1\stackrel{\delta}{\to}\cF^0$
\[
\cF^r := \bigoplus_{|S|=r,S\subset \partial \Pg}\C[X^iY^j| i\PM(x)+j\PM(y)\ge -\PM(p) \forall \PM \in S]
\]
where the sums are over the stable sets of boundary matchings and $$\delta (X^iY^j)_S=\sum_{\PM \in S} \pm (X^iY^j)_{S\setminus\{\PM\}} $$ is the boundary map between these simplicial sets. 
There is homology with $\Z^2$-degree $(i,j)$ 
if and only if the simplicial subcomplex of boundary perfect matchings for which $i\PM(x)+j\PM(y)\ge -\PM(p)$ has homology.

We can rephrase these conditions in terms by using $p'=pX^iY^j$:
The homology $H^u(\cLL(p))\ne 0$ if and only if there is a weak path $p'$ with $h(p)=h(p')$, $t(p)=t(p')$ and
$\cR(p)=\cR(p')$ such that  
\[
\begin{cases}
 \text{$\PM_r(p')\ge 0$ for all $r$} &\text{if $u=0$}\\
 \text{the $r$ for which $\PM_r(p')\ge 0$ do not form a connected segment in $\Z/k\Z$} &\text{if $u=1$}\\
 \text{$\PM_r(p')< 0$ for all $r$} &\text{if $u=1$}
\end{cases}
\]
So $H^2(p')=0$ because $\cR(p')\ge 0$, while $H^1(p')=0$ because of lemma \ref{pmscor} (5).
If $0<\cR(p)<1$ then  $H^0(p^{-1})=0$ because by Poincare duality $H^0(p^{-1})=H^2(p\ell)=0$.

To prove the statement about the sequence $(b_i)$ we need to show that
$\dim\Hom(\cLL_{i},\cLL_{i+1})$ equals the number of arrows between $w_i$ and $w_{i+1}$.
In other words none of the morphism factors. This is indeed true because the $\cR$ for every morphism
is positive but there is no vertex intermediate between $w_i$ and $w_{i+1}$.
\end{proof}
\begin{remark}
If we start wih a weak Fano dimer, construct its surface and full cyclic strongly exceptional sequence
and use this to construct a dimer again we end up with the dimer we started. This follows because 
$L_j:= \Gamma(\pi^*\cL_j)\cong T_{a^p}$  where $a^p_\mu = \cP_\mu(p)$ for $p$ any path from $\triv$ to the vertex 
$w_j$. 
So for every path $p$ starting at $\triv$ we get a module $T_{a^p}$ and $T_{a^p}\subset T_{a^q}$ if $a_{p^{-1}q}\ge 0$.
The latter condition implies that $p^{-1}q$ must be a real path in the dimer. If not let $k>0$ be the minimal power
for which $pq^{-1}\ell^k$ is real. 
From theorem 8.7 in \cite{Bocklandtcons} we know that there is a corner perfect matching $\cP_\mu$ for which 
$\cP_\mu(pq^{-1}\ell^k)=0$ but this would imply thet $a_\mu(pq^{-1}<0$.
We see that there is a bijection between the real paths in the dimer and the embeddings $T_{a^p}\subset T_{a^q}$, and hence
the embeddings that do not factor correspond to the arrows of the original dimer.
\end{remark}

\begin{remark}
Normally one would first construct a tilting bundle on the total space $Y$ using an appropriate stability condition and restrict
this to a strong exceptional sequence on the zero fiber $X$. But we prefered not to do this because in general it is not so straight
forward to cook up the stability condition that does the trick. This can be done using work by Craw and Ishii\cite{IshiiCraw}. 
In general the notion of $\triv$-stability does not give a moduli space of representations that is isomorphic to the total space $Y$. But
the $\triv$-stable perfect matchings are sufficient to generate the strong exceptional sequence without constructing a moduli space of $\theta$-stable representations.
\end{remark}

\begin{theorem}
If $\qpol$ is weak Fano then $\mirror{\qpol}$ is also weak Fano with $(a_i)^{\scriptsize{\mirror{\qpol}}}=(b_i)^{\qpol}$ and
$(b_i)^{\scriptsize{\mirror{\qpol}}}=(a_i)^{\qpol}$.
\end{theorem}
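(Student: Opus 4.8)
The plan is to establish, in order, that $\mirror{\qpol}$ is a consistent dimer on a torus, that its polygon is reflexive, and that its sequences satisfy $(a_i)^{\mirror{\qpol}}=(b_i)^{\qpol}$ and $(b_i)^{\mirror{\qpol}}=(a_i)^{\qpol}$. For the first point, $\mirror{\qpol}$ is a dimer by the construction recalled above; it has the same arrows and positive faces as $\qpol$ and the negative faces of $\qpol$ with reversed orientation, while its vertices are the zigzag cycles of $\qpol$. Hence $\#\mirror{\qpol}_0-\#\mirror{\qpol}_1+\#\mirror{\qpol}_2=\#(\text{zigzag cycles of }\qpol)-\#\qpol_1+\#\qpol_2$, which vanishes because a weak Fano dimer has as many zigzag cycles as vertices (Lemma \ref{pmscor}); so $\mirror{\qpol}$ lies on a torus. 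To see it is consistent I would transport a consistent $\cR$-charge $\cR$ of $\qpol$: the face relations are unaffected, and the vertex relation of $\mirror{\qpol}$ at a vertex $\cZ$ (that is, at a zigzag cycle $\cZ$ of $\qpol$ of length $|\cZ|$) unwinds to $\sum_{a\in\cZ}\cR_a=|\cZ|-2$. Writing $\cR=\sum_j\lambda_j\PM_j$ with $(\cR(x),\cR(y),\cR(\ell))=(0,0,2)$, one has $\sum_j\lambda_j=2$ and $\sum_j\lambda_j(\PM_j(x),\PM_j(y))=(0,0)$, the interior point of $\Pg$. Now $\PM(\cZ)$ is a homological invariant of $\cZ$ (each face contains exactly one $\PM$-arrow), hence an affine function of $(\PM(x),\PM(y))$ whose linear part is the primitive normal to the boundary segment $s_\cZ$ of $\Pg$ (the direction appearing in the proof of the $(a_i)$-proposition above). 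By Lemma \ref{pmscor}(2) the two boundary matchings on $s_\cZ$ have $\PM$-degrees on $\cZ$ summing to $|\cZ|$, and being equal (the affine function is constant along $s_\cZ$) each equals $|\cZ|/2$; since $\Pg$ is reflexive, the normal has value $1$ on $s_\cZ$ and $0$ at the interior point, so the affine function equals $|\cZ|/2-1$ there. Summing against the $\lambda_j$ (recall $\sum_j\lambda_j=2$) gives $\sum_{a\in\cZ}\cR_a=\sum_j\lambda_j\PM_j(\cZ)=2(|\cZ|/2-1)=|\cZ|-2$, so by the characterisation of consistency on a torus \cite{Bocklandtcons}, $\mirror{\qpol}$ is consistent.

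By Lemma \ref{pmscor}(1)--(2) the polygon $\Pg^{\mirror{\qpol}}$ is triangulated into $\#\mirror{\qpol}_0$ elementary triangles and has $\#(\text{zigzag cycles of }\mirror{\qpol})$ elementary segments on its boundary. The dual construction is an involution up to the natural identifications \cite{Bocklandtmirror}, so the zigzag cycles of $\mirror{\qpol}$ are the vertices of $\qpol$; hence $\Pg^{\mirror{\qpol}}$ has $\#\qpol_0=\#\mirror{\qpol}_0$ boundary segments, i.e. as many as its elementary triangles. A lattice polygon triangulated into elementary triangles has equally many triangles and boundary lattice points exactly when it contains a single interior lattice point (Pick's formula), so $\Pg^{\mirror{\qpol}}$ is reflexive and $\mirror{\qpol}$ is weak Fano.

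It remains to compute the sequences. By the $(a_i)$-proposition applied to $\mirror{\qpol}$, $(a_i)^{\mirror{\qpol}}=(k_i-2)$, where $k_i$ is the number of arrows common to consecutive zigzag cycles of $\mirror{\qpol}$. Identifying the zigzag cycles of $\mirror{\qpol}$ with the vertices of $\qpol$ and using the head/tail rule for $\mirror{\qpol}$-arrows, together with the fact (established in the proof of that proposition) that all common arrows of two consecutive zigzag cycles point the same way, we get $k_i=\#\{a\in\qpol_1\,|\,h(a)=w_{i+1},\,t(a)=w_i\}$, whence $(a_i)^{\mirror{\qpol}}=(b_i)^{\qpol}$ by the $(b_i)$-proposition. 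Symmetrically, $(b_i)^{\mirror{\qpol}}$ is, minus $2$, the number of arrows of $\mirror{\qpol}$ between consecutive vertices of $\mirror{\qpol}$; the vertices of $\mirror{\qpol}$ are the zigzag cycles of $\qpol$ and the arrows of $\mirror{\qpol}$ between two of them are their common arrows, so $(b_i)^{\mirror{\qpol}}=(a_i)^{\qpol}$ by the $(a_i)$-proposition.

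The step I expect to be the real obstacle is hidden in the last paragraph: the $(a_i)$-proposition lists zigzag cycles in the order in which they occur as boundary segments of the toric polygon, while the $(b_i)$-proposition lists vertices in the order given by the $\cR$-charge, and for $(a_i)^{\mirror{\qpol}}=(b_i)^{\qpol}$ these two cyclic orders on the index set (identified via duality) must coincide up to rotation and reflection. I would prove this by realising both as the angular order of a family of primitive lattice directions: the proof of the $(a_i)$-proposition shows the $i$-th zigzag cycle points perpendicular to $v_i-v_{i-1}$, so the boundary order is the cyclic order of these normals, while $\cR(p_w)\bmod 2$ attached to a vertex $w$ records the analogous angular position of the dual zigzag directions --- consistently, since one turn around the cyclic list of vertices changes $\cR$ by the $\cR$-degree of the anticanonical bundle, which is $\sum_j\lambda_j=2$. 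Transporting this identification across the duality, which interchanges vertices and zigzag cycles, then matches the two orders; everything else is bookkeeping with Lemma \ref{pmscor} and the definition of the dual.
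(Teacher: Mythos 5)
Your argument is correct, and it follows the same overall skeleton as the paper's proof: reduce to the torus/consistency statement, prove consistency via the $\cR$-charge criterion normalized so that $(\cR(x),\cR(y),\cR(\ell))=(0,0,2)$, use reflexivity of $\Pg$ to evaluate the vertex condition at each zigzag cycle, and conclude the interchange of $(a_i)$ and $(b_i)$ from the exchange of vertices and zigzag cycles under duality. Where you differ is the central computation that $\sum_{a\in\cZ}\cR_a=|\cZ|-2$. The paper produces the opposite path $\cO$ of $\cZ$, identifies it with a central monomial $X^iY^jZ^k$, and invokes reflexivity (via the dual polygon) to force $k=1$, hence $\cR(\cO)=2$ and $\cR(\cZ)=\cR(\cO^{-1}\ell^{z/2})=z-2$. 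You instead observe that $\PM\mapsto\PM(\cZ)$ is an affine function on the lattice of boundary matchings whose linear part is the primitive inward normal to the boundary edge $s_\cZ$, pin its value on $s_\cZ$ to $|\cZ|/2$ via Lemma \ref{pmscor}(2), evaluate at the interior point $(0,0)$, and sum against $\lambda_j$. Both use exactly the same input (reflexivity plus the normalization of $\cR$), but your version replaces the slightly opaque dual-cone step by an elementary evaluation, which some readers may find more transparent; the paper's version is shorter and makes the role of the central element $\ell$ and the opposite path visible. Two minor remarks: you are implicitly using that the homology class of $\cZ$ is primitive, which holds here because the edge vector $v_i-v_{i-1}$ between consecutive boundary lattice points is primitive; and the order-matching concern you raise at the end is real but the paper passes over it silently with ``the duality interchanges vertices and zigzag paths,'' so your sketch using the angular order of normals versus the $\cR$-order of vertices (modulo $2$, with period $\cR(\cKK^{-1})=2$) is a genuine addition rather than a gap in your own proof.
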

\begin{proof}
Because we already know the number of zigzag paths equals the number of vertices and
dimer duality interchanges both quantities, so both the dimer and its dual are embedded in a torus

Now we prove that $\mirror{\qpol}$ is consistent.
Take a zigzag path $\cZ$ in the original dimer $\qpol$. By lemma \ref{pmscor} (2) we can find $\triv$-stable perfect matchings $\PM_1,\PM_2$ on the boundary of the polygon such that $\cZ=
(\PM_1\cup \PM_2)\setminus (\PM_1\cap \PM_2)$. Let $\cO$ be the left opposite path of $\cZ$. This consists
all rest of the arrows of the positive cycles that meet $\cZ$ except those in $\cZ$ itself. 
This path has degree zero $\PM_1$ and $\PM_2$ and hence it is not a multiple of $\ell$. It can identified with
the monomial $X^{i}Y^{j}Z^k \in Z(\cJA(\qpol))$ with $(-i,-j)$ the homology class of $\cZ$ and $k$ as small as possible.
Because $\Pg$ is a polygon with one internal lattice point, the dual cone must also be generated by
a polygon with one internal lattice point so $k=1$.
 
Now $\cZ=\cO^{-1}\ell^{z/2}$ where $z$ is the length of $\cZ$ and the the zigzag path corresponds to the
element $X^{-i}Y^{-j}Z^{-1+z/2}$. 
Choose a degree function $\cR =\lambda_1\PM_1+\dots +\lambda_n \PM_n$ with $\lambda_i>0$ such that
$(\cR(x),\cR(y),\cR(\ell))=(0,0,2)$ (remember: this is possible because $(0,0,2)$ is in the cone spanned by the polygon). Then $\cR(\cZ)=2+z$ and we can write this as
\[
 \sum_{a\in \cZ}\cR(a)=-2+z \text{ or }\sum_{a\in \cZ}(1-\cR(a))=2
\]
If we look at this condition from the point of view of the dual dimer $\mirror{\qpol}$ this is precisely
the condition for a consistent $\cR$-charge.

The $(a_i)$ and $(b_i)$-sequences are interchanged because the duality interchanges vertices and zigzag paths. 
\end{proof}

\subsection{A categorical poin of view}
Using the dimer duality and the fact that both the dimer and its dual are consistent we get two
equivalences
\[
 \fuk(\qpol)\cong_{\infty}\H\mf(\mirror{\qpol}) \hspace{2cm} \fuk(\mirror{\qpol})\cong_{\infty}\H\mf(\qpol) 
\]
Following \cite{Bocklandtmirror} we can go over to the derived versions of the twisted completions of all these categories. 
In this way we get equivalences between
the derived wrapped Fukaya category of the surface with punctures $|\qpol|\setminus \qpol_0$
on the one hand and the category of singularities of $\cJA(\qpol)/(\ell)$. The latter is by definition
\[
 \D\Sing \cJA(\qpol)/(\ell) := \frac{\D^b\Mod\, \cJA(\qpol)/(\ell)}{\Perf \cJA(\qpol)/(\ell)}.
\]
By a theorem of Ishii and Ueda \cite{IU} the category of singularities
of $\cJA(\qpol)/(\ell)$ is also equivalent to the category of singularities of $f_\qpol^{-1}(0)$, 
Where $f_\qpol:\tilde Y\to \C$ corresponds to the coordinate function $\ell\in \C[Y]=Z(\cJA(\qpol)$.
So
\[
 \D\Fuk(|\qpol|\setminus \qpol_0)\cong \D\Sing f_{\mirror{\scriptstyle{\qpol}}}^{-1}(0)\hspace{2cm}
 \D\Fuk(|\mirror{\qpol}|\setminus \mirror{\qpol}_0)\cong \D\Sing f_{\qpol}^{-1}(0)
\]
Now both $|\qpol|\setminus \qpol_0$ and $|\!\mirror{\qpol}\!|\setminus \mirror{\qpol}_0$ are
tori with the same number of punctures, so we get that the 4 completed categories
above are all equivalent. But, as there is no prescribed isomorphism between the two tori, there seems
to be no canonical isomorphism between $\D\Sing f_{\mirror{\scriptstyle{\qpol}}}^{-1}(0)$ and $\D\Sing f_{\qpol}^{-1}(0)$.

One expects however to be able to identify two objects in $\D\Sing \cJA(\qpol)/(\ell)$: one with
ext-algebra equal to $\H \mf(\qpol)$ (considered as an algebra) and one with ext-algebra $\H \mf(\mirror{\qpol})$.
The former is $\cJA(\qpol)/\bar \cJ=\oplus_{v\in \qpol_0}vS_v$ viewed as a $\cJA(\qpol)/(\ell)$-module because one can easily check that
the resolution of $S_v=v\cJA(\qpol)/\bar \cJ$ stabilizes to 
$$\bigoplus_{a, h(a)=v} \bar P_a\otimes_{\cJA(\qpol)}\cJA(\qpol)/(\ell).$$
We expect the latter to be the direct sums $$\bigoplus_{a, \cZ^+_a=\cZ} \bar P_a\otimes_{\cJA(\qpol)}\cJA(\qpol)/(\ell)$$ 
where $\cZ$ is a zigzag-path. At the moment it
is not clear whether one can find objects $S_\cZ$ in $\Mod\,\cJA(\qpol)/(\ell)$ (or $\Coh f_{\qpol}^{-1}(0)$) that stabilize to these.

\section{An example: dimers for reflexive polygons of size $8$}

We end with an illustration of the main theorem for reflexive polygons of size $8$.
There are $3$ such polygons: a square, a trapezoid and a triangle. The square has $4$ dimers, the trapezoid
$2$ and the triangle $1$. The dimer duality on these $7$ dimers maps fixes 3 of the dimers (2 for the square and one for the trapezoid),
but in these cases no canonical isomorphism between the dimer and its dual. In the diagram below we show all dimers
and we labeled the matching arrows between a dimer and its dual by the same number.  
We drew the polygons next to their dimers in such a way that the zigzag paths point in the same directions
as the normals to the corresponding boundary segments in the polygon.



\bibliographystyle{amsplain}
\def\cprime{$'$}
\providecommand{\bysame}{\leavevmode\hbox to3em{\hrulefill}\thinspace}
\providecommand{\MR}{\relax\ifhmode\unskip\space\fi MR }
\providecommand{\MRhref}[2]{%
  \href{http://www.ams.org/mathscinet-getitem?mr=#1}{#2}
}
\providecommand{\href}[2]{#2}

\end{document}